\def\BibTeX{{\rm B\kern-.05em{\sc i\kern-.025em b}\kern-.08em
    T\kern-.1667em\lower.7ex\hbox{E}\kern-.125emX}}
\newtheorem{thm}{Theorem}[section]
\newtheorem{lem}[thm]{Lemma}
\theoremstyle{definition}
\theoremstyle{remark}
\newtheorem{rem}{Remark}[section]
\numberwithin{equation}{section}
    \newcommand{\floor}[1]{\lfloor#1\rfloor}
    \newcommand{\EE}{\mathbb{E}}
    \renewcommand{\Pr}{\operatorname{P}}
    \newcommand{\dto}{\xrightarrow{d}}
    \newcommand{\vto}{\xrightarrow{v}}
    \newcommand{\stas}{\stackrel{\rm a.s.}{\longrightarrow}}
    \newcommand{\rmd}{\mathrm{d}}
\newcommand{\PP}{\mathrm{P}}
\newcommand{\be}{\begin{equation}}
    \newcommand{\ee}{\end{equation}}
\begin{document}

\title[Joint convergence of partial sum and maxima for linear processes] 
{Joint functional convergence of partial sum and maxima for linear processes}

%
\author{Danijel Krizmani\'{c}}

\address{Danijel Krizmani\'{c}\\ Department of Mathematics\\
        University of Rijeka\\
        Radmile Matej\v{c}i\'{c} 2, 51000 Rijeka\\
        Croatia}
\email{dkrizmanic@math.uniri.hr}



\subjclass[2010]{Primary 60F17; Secondary 60G52}
\keywords{Functional limit theorem, Regular variation, Stable L\'{e}vy process, Extremal process, $M_{2}$ topology, Linear process}


\begin{abstract}
For linear processes with independent identically distributed innovations that are regularly varying with tail index $\alpha \in (0, 2)$, we study functional convergence of the joint partial sum and partial maxima processes.
We derive a functional limit theorem under certain assumptions on the coefficients of the linear processes which enable the functional convergence to hold in the space of $\mathbb{R}^{2}$--valued c\`{a}dl\`{a}g functions
on $[0, 1]$ with the Skorohod weak $M_{2}$ topology. Also a joint convergence in the $M_{2}$ topology on the first coordinate and in the $M_{1}$ topology on the second coordinate is obtained.
\end{abstract}

\maketitle

\section{Introduction}
\label{intro}

It is known that the joint partial sum and partial maxima processes constructed from i.i.d.~regularly varying random variables
with the tail index $\alpha \in (0,2)$
satisfy the functional limit theorem
with $(V(\,\cdot\,), W(\,\cdot\,))$ as a limit, where $V(\,\cdot\,)$ is a stable L\'{e}vy process and $W(\,\cdot\,)$ an extremal process, see Chow and Teugels~\cite{ChTe79} and Resnick~\cite{Re86}. The convergence takes place in the space $D([0, 1], \mathbb{R}^{2})$ of $\mathbb{R}^{2}$--valued c\`adl\`ag functions on $[0, 1]$ with the Skorohod $J_{1}$ topology.

In this paper we study functional convergence of a special class of weakly dependent random variables, the linear processes or moving averages processes. Due to possible clustering of large values, functional convergence fails to hold with respect to the $J_{1}$ topology, and hence we will have to use a somewhat weaker topology, namely the Skorohod weak $M_{2}$ topology. In the proofs of our results we will use the methods and results which appear in Basrak and Krizmani\'{c}~\cite{BaKr14}, where they obtained functional convergence of partial sum processes with respect to Skorohod (standard or strong) $M_{2}$ topology.

We proceed by stating the problem precisely.
Let $(Z_{i})_{i \in \mathbb{Z}}$ be an i.i.d.~sequence of regularly varying random variables with  index of regular variation $\alpha \in (0,2)$.
In particular, this means that
$$ \Pr(|Z_{i}| > x) = x^{-\alpha} L(x), \qquad x>0,$$
where $L$ is a slowly varying function at $\infty$.
Let $(a_{n})$ be a sequence of positive real numbers such that
\be\label{eq:niz}
n \Pr (|Z_{1}|>a_{n}) \to 1,
\ee
as $n \to \infty$. Then $a_{n} \to \infty$. Regular
variation of $Z_{i}$ can be expressed in terms of
vague convergence of measures on $\EE = \overline{\mathbb{R}} \setminus \{0\}$:
\begin{equation}
  \label{eq:onedimregvar}
  n \Pr( a_n^{-1} Z_i \in \cdot \, ) \vto \mu( \, \cdot \,) \qquad  \textrm{as} \ n \to \infty,
\end{equation}
with the measure $\mu$ on $\EE$  given by
\begin{equation}
\label{eq:mu}
  \mu(\rmd x) = \bigl( p \, 1_{(0, \infty)}(x) + r \, 1_{(-\infty, 0)}(x) \bigr) \, \alpha |x|^{-\alpha-1} \, \rmd x,
\end{equation}
where
\be\label{eq:pq}
p =   \lim_{x \to \infty} \frac{\Pr(Z_i > x)}{\Pr(|Z_i| > x)} \qquad \textrm{and} \qquad
  r =   \lim_{x \to \infty} \frac{\Pr(Z_i \leq -x)}{\Pr(|Z_i| > x)}.
\ee
When $\alpha \in (1,2)$ it holds that $\mathrm{E}(Z_{1}) < \infty$.
We study
the moving average process of the form
 $$X_{i} = \sum_{j=-\infty}^{\infty}\varphi_{j}Z_{i-j}, \qquad i \in \mathbb{Z},$$
where the constants $\varphi_{j}$ are such that the above series is a.s.~convergent. One sufficient condition for that is
\begin{equation}\label{e:fddconv}
 \sum_{j=-\infty}^{\infty}|\varphi_{j}|^{\delta} < \infty \quad \textrm{for some} \ 0 < \delta < \alpha,\,\delta \leq 1
\end{equation}
(see Theorem 2.1 in Cline~\cite{Cl83} or Resnick~\cite{Re87}, Section 4.5). As noted in~\cite{BaJaLo16}, condition (\ref{e:fddconv}) excludes some important cases, like the case of strictly $\alpha$-stable random variables $(Z_{i})$ with $\sum_{j}|\varphi_{j}|^{\alpha} < \infty$, but $\sum_{j}|\varphi_{j}|^{\delta}=\infty$ for every $\delta < \alpha$. To resolve this issue some new conditions, weaker then (\ref{e:fddconv}) for $\alpha \leq 1$, were proposed by Balan et al.~\cite{BaJaLo16}, Corollaries 4.6 and 4.9.
 In~\cite{As83} it was observed that if additionally holds
$$ \begin{array}{rl}
                                   \mathrm{E}(Z_{1})=0, & \quad \textrm{if} \ \alpha \in (1,2),\\[0.4em]
                                   Z_{1} \ \textrm{is symmetric}, & \quad \textrm{if} \ \alpha =1,
                                 \end{array}$$
then the series defining $X_{i}$ is a.s.~convergent if, and only if,
\begin{equation}\label{e:fddconv2}
 \sum_{j =-\infty}^{\infty}|\varphi_{j}|^{\alpha} L(|\varphi_{j}|^{-1}) < \infty
\end{equation}
(see also Proposition 5.4 in~\cite{BaJaLo16}). Note that condition (\ref{e:fddconv}) implies $\sum_{i=-\infty}^{\infty}|\varphi_{i}| < \infty$. The same holds if condition (\ref{e:fddconv2}) is satisfied when $\alpha \in (0,1)$.

Our goal is to find sufficient conditions such that, with respect to some Skorohod topology on $D([0,1], \mathbb{R}^{2})$,
\be\label{eq:AvTaintr}
\Big( \sum_{i=1}^{\floor{n\,\cdot}} \frac{X_{i} - b_{n}}{a_{n}},  \bigvee_{i=1}^{\floor{n\,\cdot}}\frac{X_{i}}{a_{n}} \Big) \dto  (\beta V(\,\cdot\,), W(\,\cdot\,)),
\ee
in $D([0,1], \mathbb{R}^{2})$, where $V(\,\cdot\,)$ is an $\alpha$--stable L\'{e}vy process, $W(\,\cdot\,)$ is an extremal process, $b_{n}$ are appropriate centering constants and $ \beta =  \sum_{j=-\infty}^{\infty}\varphi_{j} \neq 0$. $D([0,1], \mathbb{R}^{2})$ denotes the space of right continuous $\mathbb{R}^{2}$--valued functions on $[0,1]$ with left limits.

Recall here some basic facts on L\'{e}vy processes and extremal processes. The distribution of a L\'{e}vy process $V (\,\cdot\,)$ is characterized by its
characteristic triple (i.e. the characteristic triple of the infinitely divisible distribution
of $V(1)$). The characteristic function of $V(1)$ and the characteristic triple
$(a, \nu', b)$ are related in the following way:
 $$
  \mathrm{E} [e^{izV(1)}] = \exp \biggl( -\frac{1}{2}az^{2} + ibz + \int_{\mathbb{R}} \bigl( e^{izx}-1-izx 1_{[-1,1]}(x) \bigr)\,\nu'(\rmd x) \biggr)$$
for $z \in \mathbb{R}$, where $a \ge 0$, $b \in \mathbb{R}$ are constants, and $\nu'$ is a measure on $\mathbb{R}$ satisfying
$$ \nu' ( \{0\})=0 \qquad \text{and} \qquad \int_{\mathbb{R}}(|x|^{2} \wedge 1)\,\nu'(\rmd x) < \infty.$$
 We refer to Sato~\cite{Sa99} for a textbook treatment of
L\'{e}vy processes.
The distribution of an nonnegative extremal process $W(\,\cdot\,)$ is characterized by its exponent measure $\nu''$ in the following way:
$$ \PP (W(t) \leq x ) = e^{-t \nu''(x,\infty)}$$
for $t>0$ and $x>0$, where $\nu''$ is a measure on $(0,\infty)$ satisfying
$ \nu'' (\delta, \infty) < \infty$
for any $\delta >0$ (see Resnick~\cite{Resnick07}, page 161).

  If $X_{i}$ is a finite order moving average with at least two nonzero coefficients, then the convergence in (\ref{eq:AvTaintr}) cannot hold in the $J_{1}$ sense, since as showed by Avram and Taqqu~\cite{AvTa92} the $J_{1}$ convergence fails to hold for the first components of the processes in (\ref{eq:AvTaintr}), i.e. for partial sum processes. Astrauskas~\cite{As83} and Davis and Resnick~\cite{DaRe85} showed that the normalized sums of $X_i$'s under~(\ref{e:fddconv}) converge in distribution to a stable random variable. Basrak and Krizmani\'{c}~\cite{BaKr14} replaced this convergence by weak convergence with respect to the Skorohod $M_{_2}$ topology, i.e. they showed that the convergence for partial sums,
$$  \sum_{i=1}^{\floor{n\,\cdot}} \frac{X_{i} - b_{n}}{a_{n}} \dto  \beta V(\,\cdot\,)$$
holds in the $M_{2}$ topology with the following assumption on the coefficients $\varphi_{i}$:
$\varphi_{j}=0$ for $j < 0$, $\varphi_{0}, \varphi_{1}, \ldots \in \mathbb{R}$ and for every $s=0,1,2,\ldots$
\begin{equation}\label{e:BaKrassumption}
 0 \leq \sum_{j=0}^{s}\varphi_{j} \bigg/ \sum_{j=0}^{\infty}\varphi_{j} \leq 1.
 \end{equation}
The characteristic
 triple of the limiting process $V(\,\cdot\,)$  is of the form $(0,\mu,b)$, with $\mu$ as in $(\ref{eq:mu})$ and
$$ b = \left\{ \begin{array}{cc}
                                   0, & \quad \alpha = 1\\[0.4em]
                                   (p-r)\frac{\alpha}{1-\alpha}, & \quad \alpha \in (0,1) \cup (1,2)
                                 \end{array}\right..$$
 As for the partial maxima, Resnick~\cite{Re87} showed that if $\varphi_{+}p + \varphi_{-}r>0$, then, as $n \to \infty$,
$$  \bigvee_{i=1}^{\floor{n\,\cdot}} \frac{X_{i}}{a_{n}} \dto   W(\,\cdot\,)$$
in the $J_{1}$ topology, where
$$\varphi_{+}=\max \{ \varphi_{j} \vee 0 : j \in \mathbb{Z}\}, \qquad \varphi_{-}= \max \{ -\varphi_{j} \vee 0 : j \in \mathbb{Z}\},$$
and $W(\,\cdot\,)$ is an extremal process with exponent measure
$$ \nu(dx) = (\varphi_{+}^{\alpha}p + \varphi_{-}^{\alpha}r) \alpha x^{-\alpha-1}1_{(0,\infty)}(x)\,dx.$$
(see Proposition 4.28 in Resnick~\cite{Re87}).


In this article we will show that, under assumptions (\ref{e:BaKrassumption}) and $\varphi_{+}p + \varphi_{-}r>0$,
 relation (\ref{eq:AvTaintr}) holds in  the weak $M_{2}$ topology.
 In order to do so, we first in Section~\ref{S:M2top} recall the precise definition of the weak $M_{2}$ topology, and then in Section~\ref{S:MA} we proceed by proving (\ref{eq:AvTaintr}) for finite order moving average processes and then we
extend this to infinite order moving average processes. At the end in Remark~\ref{r:jcM2M1} we discuss joint convergence in (\ref{eq:AvTaintr}) in the $M_{2}$ topology on the first coordinate and in the $M_{1}$ topology on the second coordinate.

\section{Skorohod $M_{2}$ topologies}\label{S:M2top}

We start with a definition of the Skorohod weak $M_{2}$ in a general space $D([0,1], \mathbb{R}^{d})$ of $\mathbb{R}^{d}$--valued c\`{a}dl\`{a}g functions on
$[0,1]$.

The weak $M_{2}$ topology on $D([0, 1], \mathbb{R}^{d})$ is defined using completed graphs.
For $x \in D([0,1],
\mathbb{R}^{d})$ the completed (thick) graph of $x$ is the set
\[
  G_{x}
  = \{ (t,z) \in [0,1] \times \mathbb{R}^{d} : z \in [[x(t-), x(t)]]\},
\]
where $x(t-)$ is the left limit of $x$ at $t$ and $[[a,b]]$ is the product segment, i.e.
$[[a,b]]=[a_{1},b_{1}] \times [a_{2},b_{2}] \ldots \times [a_{d},b_{d}]$
for $a=(a_{1}, a_{2}, \ldots, a_{d}), b=(b_{1}, b_{2}, \ldots, b_{d}) \in
\mathbb{R}^{d}$. We define an
order on the graph $G_{x}$ by saying that $(t_{1},z_{1}) \le
(t_{2},z_{2})$ if either (i) $t_{1} < t_{2}$ or (ii) $t_{1} = t_{2}$
and $|x_{j}(t_{1}-) - z_{1j}| \le |x_{j}(t_{2}-) - z_{2j}|$
for all $j=1, 2, \ldots, d$. The relation $\le$ induces only a partial
order on the graph $G_{x}$. A weak $M_{2}$ parametric representation
of the graph $G_{x}$ is a continuous function $(r,u)$
mapping $[0,1]$ into $G_{x}$, such that $r$ is nondecreasing with $r(0)=0$, $r(1)=1$ and $u(1)=x(1)$ ($r$ is the
time component and $u$ the spatial component). Let $\Pi_{w}(x)$ denote the set of weak $M_{2}$
parametric representations of the graph $G_{x}$. For $x_{1},x_{2}
\in D([0,1], \mathbb{R}^{d})$ define
\[
  d_{w}(x_{1},x_{2})
  = \inf \{ \|r_{1}-r_{2}\|_{[0,1]} \vee \|u_{1}-u_{2}\|_{[0,1]} : (r_{i},u_{i}) \in \Pi_{w}(x_{i}), i=1,2 \},
\]
where $\|x\|_{[0,1]} = \sup \{ \|x(t)\| : t \in [0,1] \}$. Now we
say that $x_{n} \to x$ in $D([0,1], \mathbb{R}^{d})$ for a sequence
$(x_{n})$ in the weak Skorohod $M_{2}$ (or shortly $WM_{2}$)
topology if $d_{w}(x_{n},x)\to 0$ as $n \to \infty$.

If we replace above the graph $G_{x}$ with the completed (thin) graph
\[
  \Gamma_{x}
  = \{ (t,z) \in [0,1] \times \mathbb{R}^{d} : z= \lambda x(t-) + (1-\lambda)x(t) \ \text{for some}\ \lambda \in [0,1] \},
\]
and a weak $M_{2}$ parametric representation with a strong $M_{2}$ parametric representation (i.e. a continuous function $(r,u)$ mapping $[0,1]$ onto $\Gamma_{x}$ such that $r$ is nondecreasing), then we obtain the standard (or strong) $M_{2}$ topology. This topology is stronger than the weak $M_{2}$ topology, but they coincide if $d=1$. Both topologies are weaker than the more frequently used Skorohod $J_{1}$ and $M_{1}$ topologies.
The $M_{2}$ topology on $D([0,1], \mathbb{R})$ can be generated using the Hausdorff metric on the spaces of graphs. For $x_{1},x_{2} \in D([0,1], \mathbb{R})$ define
$$ d_{M_{2}}(x_{1}, x_{2}) = \bigg(\sup_{a \in \Gamma_{x_{1}}} \inf_{b \in \Gamma_{x_{2}}} d(a,b) \bigg) \vee \bigg(\sup_{a \in \Gamma_{x_{2}}} \inf_{b \in \Gamma_{x_{1}}} d(a,b) \bigg),$$
where $d$ is the metric on $\mathbb{R}^{2}$ defined by $d(a,b)=|a_{1}-b_{1}| \vee |a_{2}-b_{2}|$ for $a=(a_{1},a_{2}), b=(b_{1},b_{2}) \in \mathbb{R}^{2}$.

The weak $M_{2}$ topology on $D([0,1], \mathbb{R}^{2})$
coincides with the (product) topology induced by the metric
\begin{equation}\label{e:defdp}
 d_{p}(x_{1},x_{2})= \max_{j=1,2}d_{M_{2}}(x_{1j},x_{2j})
\end{equation}
 for $x_{i}=(x_{i1}, x_{i2}) \in D([0,1],
 \mathbb{R}^{2})$, $i=1,2$. For detailed discussion of the strong and weak $M_{2}$ topologies we refer to
Whitt~\cite{Whitt02}, sections 12.10--12.11.

In the next section we will use the following lemma.

\begin{lem}\label{l:weakM2transf}
Let $(A_{n}, B_{n})$, $n=0,1,2,\ldots$, be stochastic processes in $D([0,1], \mathbb{R}^{2})$ such that, as $n \to \infty$,
\begin{equation}\label{e:lemM2}
(A_{n}(\,\cdot\,), B_{n}(\,\cdot\,)) \dto (A_{0}(\,\cdot\,), B_{0}(\,\cdot\,))
\end{equation}
in $D([0,1], \mathbb{R}^{2})$ with the weak $M_{2}$ topology. Suppose $x_{n}$, $n=0,1,2,\ldots$, are elements of $D([0,1], \mathbb{R})$ with $x_{0}$ being continuous, such that, as $n \to \infty$,
$$x_{n}(t) \to x_{0}(t) $$
uniformly in $t$. Then
$$ (A_{n}(\,\cdot\,) +x_{n}(\,\cdot\,), B_{n}(\,\cdot\,)) \dto (A_{0}(\,\cdot\,) + x_{0}(\,\cdot\,), B_{0}(\,\cdot\,))$$
in $D([0,1], \mathbb{R}^{2})$ with the weak $M_{2}$ topology.
\end{lem}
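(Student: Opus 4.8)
The plan is to deduce the claim from a converging-together (Slutsky-type) argument, after first recording two elementary estimates for the one-dimensional metric $d_{M_{2}}$. Throughout I would work with the product characterization \eqref{e:defdp} of the weak $M_{2}$ topology, so that weak $M_{2}$ convergence in $D([0,1],\mathbb{R}^{2})$ is simultaneous $d_{M_{2}}$ convergence of the two coordinates. Since the second coordinate $B_{n}$ is left untouched, the whole difficulty concentrates on the first coordinate and on the interplay between $A_{n}$ and the deterministic perturbations $x_{n}$.

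First I would establish two metric inequalities. (i) For any $f,g \in D([0,1],\mathbb{R})$ one has $d_{M_{2}}(f,g) \le \|f-g\|_{[0,1]}$: given $(t,z)\in\Gamma_{f}$ with $z=\lambda f(t-)+(1-\lambda)f(t)$, the point $(t,\lambda g(t-)+(1-\lambda)g(t))\in\Gamma_{g}$ lies within $\|f-g\|_{[0,1]}$ of it (left limits are limits of values), and symmetry bounds the Hausdorff distance. (ii) For $f,g\in D([0,1],\mathbb{R})$ and a \emph{continuous} $h$ with modulus of continuity $\omega_{h}(\delta)=\sup_{|t-s|\le\delta}|h(t)-h(s)|$,
\[
 d_{M_{2}}(f+h,\,g+h) \le d_{M_{2}}(f,g) + \omega_{h}\bigl(d_{M_{2}}(f,g)\bigr).
\]
The point is that continuity of $h$ forces $\Gamma_{f+h}=\{(t,z+h(t)):(t,z)\in\Gamma_{f}\}$ (no new jumps are created, and at each $t$ the completed segment is shifted pointwise by $h(t)$, since $h(t-)=h(t)$). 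Thus if $(s,z')\in\Gamma_{g}$ realizes the distance to $(t,z)\in\Gamma_{f}$ up to $\varepsilon$, then $(s,z'+h(s))\in\Gamma_{g+h}$ satisfies $|t-s|\le d_{M_{2}}(f,g)+\varepsilon$ and $|(z+h(t))-(z'+h(s))|\le|z-z'|+|h(t)-h(s)|$, the last term being controlled by $\omega_{h}$ evaluated at $|t-s|$; letting $\varepsilon\downarrow 0$ gives (ii).

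With these in hand the argument runs in two steps. Step one is a continuous-mapping step: since $x_{0}$ is continuous and fixed, inequality (ii) shows the map $T\colon(f,g)\mapsto(f+x_{0},g)$ is continuous on $D([0,1],\mathbb{R}^{2})$ for the weak $M_{2}$ topology (if $d_{M_{2}}(f_{k},f)\to 0$ then $d_{M_{2}}(f_{k}+x_{0},f+x_{0})\le d_{M_{2}}(f_{k},f)+\omega_{x_{0}}(d_{M_{2}}(f_{k},f))\to 0$ by uniform continuity of $x_{0}$ on $[0,1]$). Applying the continuous mapping theorem to \eqref{e:lemM2} yields
\[
 (A_{n}(\,\cdot\,)+x_{0}(\,\cdot\,),\,B_{n}(\,\cdot\,)) \dto (A_{0}(\,\cdot\,)+x_{0}(\,\cdot\,),\,B_{0}(\,\cdot\,)).
\]
Step two removes the gap between $x_{n}$ and $x_{0}$: by \eqref{e:defdp} and inequality (i),
\[
 d_{p}\bigl((A_{n}+x_{n},B_{n}),(A_{n}+x_{0},B_{n})\bigr) = d_{M_{2}}(A_{n}+x_{n},\,A_{n}+x_{0}) \le \|x_{n}-x_{0}\|_{[0,1]} \to 0
\]
deterministically. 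The converging-together lemma (e.g. Billingsley, Theorem 3.1), applicable since $D([0,1],\mathbb{R}^{2})$ with $d_{p}$ is separable, then upgrades Step one to $(A_{n}+x_{n},B_{n})\dto(A_{0}+x_{0},B_{0})$, which is the assertion.

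The routine part is the Slutsky and continuous-mapping bookkeeping; the only genuinely technical point — and the step I would treat most carefully — is inequality (ii), since it hinges on the precise behaviour of the completed thin graph under the addition of a continuous function, in particular on the fact that continuity of $x_{0}$ creates no new jumps, so that the graph shift is exactly pointwise. Note that the $x_{n}$ themselves may have jumps, but they enter only through the uniform estimate (i), where no graph-shift structure is needed.
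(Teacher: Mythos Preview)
Your proof is correct and follows essentially the same two-step architecture as the paper's: first apply the continuous mapping theorem to the shift $(f,g)\mapsto(f+x_{0},g)$, then close the gap between $x_{n}$ and $x_{0}$ via a Slutsky argument using the uniform bound $d_{M_{2}}\le\|\cdot\|_{[0,1]}$. The only difference is that where the paper invokes Whitt's Corollary~12.11.5 for the continuity of addition by a continuous function, you supply a self-contained proof via your inequality~(ii); this is a minor expository choice rather than a genuinely different route.
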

\begin{proof}
Let $C_{n} := (A_{n}, B_{n})$.
For $n=0,1,2,\ldots$ define functions $y_{n} \colon [0,1] \to \mathbb{R}^{2}$ by $y_{n}(t)=(x_{n}(t), 0)$.
Then clearly $y_{n} \in D([0,1], \mathbb{R}^{2})$. Since $x_{0}$ is continuous, by Corollary 12.11.5 in Whitt~\cite{Whitt02} and the definition of the metric $d_{p}$ in (\ref{e:defdp}) it follows that the function $h \colon D([0,1], \mathbb{R}^{2}) \to D([0,1], \mathbb{R}^{2})$ defined by $h(x) = x + y_{0}$ is continuous with respect to the weak $M_{2}$ topology.
Therefore by the continuous mapping theorem from (\ref{e:lemM2}) we obtain, as $n \to \infty$,
$ h(C_{n}) \dto h(C_{0})$, i.e.
\begin{equation}\label{e:lemM2a}
C_{n}(\,\cdot\,) + y_{0}(\,\cdot\,) \dto C_{0}(\,\cdot\,) + y_{0}(\,\cdot\,)
\end{equation}
in $D([0,1], \mathbb{R}^{2})$ under the weak $M_{2}$ topology.

If we show that
\begin{equation*}\label{e:lemM2b}
 \lim_{n \to \infty} \Pr [ d_{p}(C_{n}+y_{n}, C_{n}+y_{0}) > \delta ]=0
\end{equation*}
for any $\delta > 0$, then from (\ref{e:lemM2a}) by Slutsky's theorem (see Theorem 3.4 in Resnick~\cite{Resnick07}) we will have $C_{n}+y_{n} \dto C_{0} + y_{0}$ in $D([0,1], \mathbb{R}^{2})$ with the weak $M_{2}$ topology. Recalling the definition of the metric $d_{p}$ and the fact that the Skorohod $M_{2}$ metric on $D([0,1], \mathbb{R})$ is bounded above by the uniform metric on $D([0,1], \mathbb{R})$, we have
\begin{eqnarray*}
  \Pr [ d_{p}(C_{n}+y_{n}, C_{n}+y_{0}) > \delta ] &=& \Pr [ d_{M_{2}}(  x_{n}, x_{0} ) > \delta ]\\[0.6em]
  & \leq &   \Pr \bigg( \sup_{t \in [0,1]} \big| x_{n}(t) - x_{0}(t) \big| > \delta \bigg).
\end{eqnarray*}
Since $x_{n}(t) \to x_{0}(t)$ uniformly in $t$, we immediately obtain $\Pr [ d_{p}(C_{n}+y_{n}, C_{n}+y_{0}) > \delta ] \to 0$ as $n \to \infty$, and hence
$C_{n}+y_{n} \dto C_{0} + y_{0}$ as $n \to \infty$, i.e.
$$ (A_{n}(\,\cdot\,) + x_{n}(\,\cdot\,), B_{n}(\,\cdot\,)) \dto (A_{0}(\,\cdot\,) + x_{0}(\,\cdot\,), B_{0}(\,\cdot\,))$$
in $D([0,1], \mathbb{R}^{2})$
 with the weak $M_{2}$ topology.
\end{proof}

\section{Functional limit theorem}
\label{S:MA}

Let $(Z_{i})_{i \in \mathbb{Z}}$ be an i.i.d.~sequence of regularly varying random variables with index $\alpha \in (0,2)$. When $\alpha=1$, assume further that $Z_{1}$ is symmetric.
Let $\{\varphi_{i}, i=0,1,2,\ldots\}$ be a sequence of real numbers satisfying
\be\label{eq:InfiniteMAcond}
0 \le \sum_{i=0}^{s}\varphi_{i} \Bigg/ \sum_{i=0}^{\infty}\varphi_{i} \le 1, \qquad \textrm{for every} \ s=0, 1, 2 \ldots,
\ee
and such that the series defining the moving average process
$$ X_{i} = \sum_{j=0}^{\infty}\varphi_{j}Z_{i-j}, \qquad i \in \mathbb{Z}$$
is a.s.~convergent. We assume also that $\sum_{i=0}^{\infty}|\varphi_{i}| < \infty$. Hence $ \beta = \sum_{i=0}^{\infty}\varphi_{i}$ is finite.
Without loss of generality we may assume $\beta > 0$ (the case $\beta<0$ is treated analogously and is therefore omitted).
Let
$$\varphi_{+}=\max \{ \varphi_{j} \vee 0 : j \geq 0\}, \qquad \varphi_{-}= \max \{ -\varphi_{j} \vee 0 : j \geq 0\}.$$
Define further the corresponding partial sum and maxima processes
\be\label{eq:defVn}
V_{n}(t) = \frac{1}{a_{n}} \Bigg( \sum_{i=1}^{\floor {nt}}X_{i} - \floor {nt}b_{n}\Bigg), \qquad W_{n}(t)= \frac{1}{a_{n}} \bigvee_{i=1}^{\floor {nt}}X_{i}, \qquad t \in [0,1],
\ee
where the normalizing sequence $(a_n)$ satisfies~\eqref{eq:onedimregvar} and
$$ b_{n} = \left\{ \begin{array}{cc}
                                   0, & \quad \alpha \in (0,1], \\
                                   \beta \mathrm{E}(Z_{1}), & \quad \alpha \in (1,2).
                                 \end{array}\right.$$


\begin{thm}\label{t:FinMA}
Let $(Z_{i})_{i \in \mathbb{Z}}$ be an i.i.d.~sequence of regularly varying random variables with index $\alpha \in (0,2)$. When $\alpha=1$, suppose further that $Z_{1}$ is symmetric. Let $\{\varphi_{i}, i=0,1,2,\ldots\}$ be a sequence of real numbers satisfying (\ref{eq:InfiniteMAcond}), $\sum_{j=0}^{\infty}|\varphi_{j}| < \infty$ and $\varphi_{+}p + \varphi_{-}r>0$, with $p$ and $r$ as in (\ref{eq:pq}).
Then, as $n \to \infty$,
$$ L_{n}(\,\cdot\,) := (V_{n}(\,\cdot\,), W_{n}(\,\cdot\,)) \dto (\beta V(\,\cdot\,), W(\,\cdot\,))$$
in $D([0,1], \mathbb{R}^{2})$ endowed with the weak $M_{2}$ topology, where $V$ is an $\alpha$--stable L\'{e}vy process with characteristic triple $(0,\mu,b)$, with $\mu$ as in $(\ref{eq:mu})$ and
$$ b = \left\{ \begin{array}{cc}
                                   0, & \quad \alpha = 1\\[0.4em]
                                   (p-r)\frac{\alpha}{1-\alpha}, & \quad \alpha \in (0,1) \cup (1,2)
                                 \end{array}\right.,$$
and $W$ is an extremal process with exponent measure
$$\nu(dx)= (\varphi_{+}^{\alpha}p + \varphi_{-}^{\alpha}r) \alpha x^{-\alpha-1}1_{(0,\infty)}(x)\,dx.$$
\end{thm}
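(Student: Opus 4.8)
The plan is to prove the statement first for finite order moving averages $X_i=\sum_{j=0}^{q}\varphi_j Z_{i-j}$ and then to remove the finiteness assumption by a truncation argument. The backbone of the finite order case is the time--space point process convergence
\[
 N_n:=\sum_{i=1}^{n}\delta_{(i/n,\,Z_i/a_n)}\dto N:=\sum_{k}\delta_{(t_k,j_k)},
\]
a Poisson random measure on $[0,1]\times\EE$ with intensity $\mathrm{Leb}\times\mu$, which holds for i.i.d.\ regularly varying innovations. From $N_n$ I would read off \emph{both} coordinate processes simultaneously: a single large value $Z_k\approx a_n x$ feeds $\varphi_0x,\dots,\varphi_qx$ into $X_k,\dots,X_{k+q}$, so over a window of width $q/n\to0$ the centered partial sum accumulates a net increment $\beta x$ while the running maximum rises to $\psi(x):=(\varphi_{+}x)\vee(\varphi_{-}(-x))\vee 0$. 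Thus the sum coordinate will converge to $\beta V$ with $V$ carrying Lévy measure $\mu$ and drift $b$, while pushing $\mu$ forward under $\psi$ produces exactly the exponent measure $\nu$ of $W$; in particular the tails $p,r$ of (\ref{eq:pq}) generate the weights $\varphi_{+}^{\alpha}p+\varphi_{-}^{\alpha}r$.

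The key is a continuous--mapping step performed on the big--jump part of the configuration. For fixed $\eps>0$ I restrict to marks $|j_k|>\eps$ (finitely many in $[0,1]$ a.s.) and let $\Phi_\eps$ send such a configuration to the pair consisting of the pure--jump process $t\mapsto\beta\sum_{t_k\le t}j_k$ and the running--maximum process $t\mapsto\bigvee_{t_k\le t}\psi(j_k)$. Since $d_p$ in (\ref{e:defdp}) is the maximum of the two coordinatewise $M_2$ distances, $\Phi_\eps$ is continuous into $(D([0,1],\R^2),d_p)$ as soon as each coordinate is $M_2$--continuous, and the continuous mapping theorem then yields the \emph{joint} convergence $\Phi_\eps(N_n^{(\eps)})\dto\Phi_\eps(N^{(\eps)})$, precisely because both coordinates are read off the single configuration. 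I then compare $\Phi_\eps(N_n^{(\eps)})$ with the genuine truncated processes $(V_n^{(\eps)},W_n^{(\eps)})$ built from the innovations $Z_i1_{\{|Z_i|>\eps a_n\}}$: distinct big jumps are separated by $\gg q$ indices with probability tending to one, so each cluster is isolated, the running maximum is automatically monotone, and---this is decisive---condition (\ref{eq:InfiniteMAcond}) forces every intermediate cumulative coefficient to satisfy $(\sum_{l\le s}\varphi_l)x\in[0,\beta x]$, so the cluster of sum--increments never leaves the segment spanned by the endpoints of the limiting jump. Hence the completed thin graphs of the spread--out cluster and of the single jump have Hausdorff distance of order $q/n$, the $d_p$ distance between $\Phi_\eps(N_n^{(\eps)})$ and $(V_n^{(\eps)},W_n^{(\eps)})$ tends to $0$ in probability, and $(V_n^{(\eps)},W_n^{(\eps)})\dto(\beta V^{(\eps)},W^{(\eps)})$.

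\textbf{The main obstacle} is exactly this segment--confinement argument, and it is the reason the weak $M_2$---rather than $M_1$ or $J_1$---topology is forced: (\ref{eq:InfiniteMAcond}) guarantees only that the partial sums stay in $[0,\beta x]$, not that they traverse it monotonically, so the cluster need not converge to a single jump in $M_1$, whereas the Hausdorff/graph description underlying $d_{M_2}$ still applies. Letting $\eps\downarrow0$, the compound processes $(\beta V^{(\eps)},W^{(\eps)})$ converge to $(\beta V,W)$, and it remains to show the small--jump remainder is asymptotically negligible uniformly in $n$, namely
\[
 \lim_{\eps\downarrow0}\ \limsup_{n\to\infty}\ \Pr\!\big[d_p\big((V_n^{(\eps)},W_n^{(\eps)}),(V_n,W_n)\big)>\eta\big]=0,\qquad \eta>0.
\]
In the maximum coordinate the small innovations contribute negligibly to the running extremes, while in the sum coordinate this is precisely the estimate of Basrak and Krizmani\'c~\cite{BaKr14}, where the three regimes split according to $b_n$: mean--zero centering for $\alpha\in(1,2)$, symmetry of $Z_1$ for $\alpha=1$, and absolute summability of the small--jump sum for $\alpha\in(0,1)$. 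A standard converging--together argument then gives $L_n\dto(\beta V,W)$ in the weak $M_2$ topology for finite order processes, the marginal identifications being consistent with \cite{Re87}.

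For the infinite order case I would truncate the coefficients at level $q$ and compare $L_n$ with the functional $L_n^{(q)}$ attached to $X_i^{(q)}=\sum_{j=0}^{q}\varphi_j Z_{i-j}$. The point to be careful about is that the comparison must be made against the \emph{full} limit, keeping the true $\beta$ and the segment $[0,\beta x]$, rather than applying the finite order result to the truncated coefficients (which need not satisfy (\ref{eq:InfiniteMAcond})). Using $\sum_{j>q}|\varphi_j|\to0$---hence $\sup_{s\ge q}|\sum_{l\le s}\varphi_l-\beta|\to0$---together with the negligibility estimates above, the tail of each cluster stays within $[0,\beta x]$ and its contribution to both the sum and the maximum process is uniformly small in $d_p$ as $q\to\infty$; since $\beta_q\to\beta$ and $\varphi_{\pm}^{(q)}\to\varphi_{\pm}$ make the finite order limits converge to $(\beta V,W)$, a second converging--together argument completes the proof. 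The deterministic centering correction left by matching the truncated and full normalizations converges uniformly to a continuous function of $t$ in the first coordinate, and is absorbed by Lemma~\ref{l:weakM2transf}.
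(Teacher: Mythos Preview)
Your finite order argument follows the same backbone as the paper's---point process convergence $N_n\dto N$, a sum--maximum functional read off the configuration, and removal of small jumps---but is organized differently. The paper does not compare $\eps$--truncated MA processes with $\Phi_\eps(N_n^{(\eps)})$; instead it introduces an i.i.d.--based auxiliary pair $L_n^Z=(V_n^Z,W_n^Z)$ with $V_n^Z(t)=a_n^{-1}\sum_{i\le nt}(\beta Z_i-b_n)$ and $W_n^Z(t)=a_n^{-1}\bigvee_{i\le nt}\psi(Z_i)$, proves $L_n^Z\dto(\beta V,W)$ via Lemma~\ref{t:FinMAlemma01} (which absorbs the entire $\eps\downarrow0$ passage, with Lemma~\ref{l:weakM2transf} handling the centering), and then shows $d_p(L_n^Z,L_n)\to0$ in probability directly, without any jump truncation at this stage. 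The sum coordinate of that last step is quoted from~\cite{BaKr14}; the maximum coordinate is a several--page case analysis (the events $H_{n,1},\dots,H_{n,4}$) controlling the Hausdorff distance between the graphs of $W_n$ and $W_n^Z$. Your one--sentence cluster argument is morally the same but hides exactly this work: ruling out interference between nearby large innovations and edge effects near $i\le q$ and $i>n-q$ is precisely what those events accomplish.

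There is a real gap in your infinite order step. You correctly observe that the naive truncation $X_i^{(q)}=\sum_{j\le q}\varphi_j Z_{i-j}$ need not satisfy~(\ref{eq:InfiniteMAcond}), yet you then appeal to ``the finite order limits'', which requires precisely that condition (or a carefully stated relaxation you do not formulate). The paper sidesteps the problem by a different truncation: it sets $X_i^q=\sum_{j<q}\varphi_j Z_{i-j}+\varphi'_q Z_{i-q}$ with $\varphi'_q=\sum_{j\ge q}\varphi_j$, so the truncated coefficients still sum exactly to $\beta$ and, for $q$ large enough that $|\varphi'_q|<\lambda$, also share the same $\varphi_{+}$ and $\varphi_{-}$. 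Condition~(\ref{eq:FiniteMAcond}) then holds verbatim, the finite order result applies with the \emph{same} limit $(\beta V,W)$ for every such $q$, and a single Slutsky step bounding $\sum_i|X_i^q-X_i|/a_n$ (already established in~\cite{BaKr14}) finishes the proof---no $\beta_q\to\beta$, no $\varphi_{\pm}^{(q)}\to\varphi_{\pm}$, and no second use of Lemma~\ref{l:weakM2transf}.
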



In the proof of the theorem we are going to use the following lemma.

\begin{lem}\label{t:FinMAlemma01}
Let
$$  V_{n}^{Z}(t) := \sum_{i=1}^{\floor {nt}}\frac{\beta Z_{i}-b_{n}}{a_{n}}, \quad  W_{n}^{Z}(t) := \bigvee_{i=1}^{\floor {nt}}\frac{ |Z_{i}|}{a_{n}}(\varphi_{+}1_{\{ Z_{i} >0 \}} + \varphi_{-} 1_{\{ Z_{i}<0 \}}), \qquad t \in [0,1].$$
Then, as $n \to \infty$,
\begin{equation}\label{e:pomkonv}
L_{n}^{Z}(\,\cdot\,) := (V_{n}^{Z}(\,\cdot\,), W_{n}^{Z}(\,\cdot\,)) \dto (\beta V(\,\cdot\,),  W(\,\cdot\,))
\end{equation}
 in $D([0,1], \mathbb{R}^{2})$ with the weak $M_{2}$ topology,
where $V$ is an $\alpha$--stable L\'{e}vy process with characteristic triple $(0,\mu,b)$ and $W$ is an extremal process
 with exponent measure $\nu(dx)= (\varphi_{+}^{\alpha}p + \varphi_{-}^{\alpha}r) \alpha x^{-\alpha-1}1_{(0,\infty)}(x)\,dx$.
\end{lem}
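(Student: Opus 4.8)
The plan is to represent the pair $(V_n^Z,W_n^Z)$ as the image of a single point process under a combined summation--maximum functional, to establish convergence of these point processes, and then to transport the convergence through the functional after a truncation at level $\eps$. Because both coordinate processes are built from the \emph{same} i.i.d.~sequence $(Z_i)$ --- and hence jump only at the deterministic times $i/n$, whose limiting positions are the points of a Poisson process --- I expect the joint convergence to hold already in the $J_1$ topology; since $J_1$ is stronger than the weak $M_2$ topology (as recalled in Section~\ref{S:M2top}), this is more than enough. Working with one point process is essential: the limits $\beta V$ and $W$ are driven by the same points and are therefore dependent, so the lemma is genuinely a statement about the coupled limit and not a product of marginals.

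First I would record the basic point process convergence. With $N_n=\sum_{i=1}^n \delta_{(i/n,\,Z_i/a_n)}$ regarded as a random element of the space of Radon point measures on $[0,1]\times\EE$, the regular variation (\ref{eq:onedimregvar}) and the i.i.d.~structure give, by standard theory (Resnick~\cite{Resnick07}), $N_n\dto N=\sum_k\delta_{(t_k,j_k)}$, where $N$ is a Poisson random measure with intensity $\mathrm{Leb}\times\mu$. I would then fix the two almost sure features of $N$ on which everything rests: the time coordinates $t_k$ are pairwise distinct, and for each fixed $\eps>0$ the measure $N$ charges neither the line $\{|x|=\eps\}$ nor the terminal slice $\{t=1\}$.

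Next comes the functional. Since for each $i$ at most one of $Z_i^+,Z_i^-$ is nonzero, the second coordinate may be rewritten as $W_n^Z(t)=\bigvee_{i\le\floor{nt}}\max\bigl(\varphi_+ Z_i^+/a_n,\ \varphi_- Z_i^-/a_n\bigr)$, so it depends on the points only through their signed sizes. For $\eps>0$ I would introduce the truncated functional sending a configuration $\sum_k\delta_{(t_k,x_k)}$ to the pair whose first component is the partial sum over $t_k\le t,\ |x_k|>\eps$ of $\beta x_k$ (compensated by a deterministic term when $\alpha\in[1,2)$), and whose second component is the running maximum over the same points of $|x_k|\bigl(\varphi_+1_{\{x_k>0\}}+\varphi_-1_{\{x_k<0\}}\bigr)$. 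Restricted to configurations with distinct time coordinates and no mass on $\{|x|=\eps\}$, this map is continuous from the vague topology into $(D([0,1],\R^2),J_1)$, because only finitely many points survive the truncation and their locations vary continuously. The continuous mapping theorem then yields convergence of the truncated pair $(V_n^{Z,(\eps)},W_n^{Z,(\eps)})$ to $(\beta V^{(\eps)},W^{(\eps)})$, the same functional applied to $N$.

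The remaining and genuinely delicate step is the passage $\eps\to0$ in the summation coordinate, carried out while keeping it coupled, jump for jump, to the maximum coordinate. For $\alpha\in(0,1)$ no compensation is needed and the discarded small jumps are controlled in the uniform norm by their first absolute moment; for $\alpha\in(1,2)$ the retained jumps must be centered by their (deterministic and convergent) conditional mean so that $\beta V^{(\eps)}$ is a compensated compound Poisson process, with the small-jump remainder estimated by a second-moment maximal inequality; for $\alpha=1$ the assumed symmetry of $Z_1$ makes all centerings vanish and supplies the required cancellation. In every case I would verify $\lim_{\eps\to0}\limsup_{n}\Pr\bigl(\sup_{t}|V_n^{Z,(\eps)}(t)-V_n^{Z}(t)|>\eta\bigr)=0$ for each $\eta>0$; these are precisely the estimates already established for the single partial-sum coordinate in Basrak and Krizmani\'{c}~\cite{BaKr14}. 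The maximum coordinate is easier: its truncation error is deterministically at most $\eps\max(\varphi_+,\varphi_-)$, while $\beta V^{(\eps)}\to\beta V$ and $W^{(\eps)}\to W$ as $\eps\to0$. A standard converging-together argument (in the spirit of Slutsky's theorem, cf.~Theorem 3.4 in Resnick~\cite{Resnick07}) then combines the per-$\eps$ limits with the negligibility to give $L_n^Z\dto(\beta V,W)$ in $J_1$, hence in the weak $M_2$ topology. The main obstacle is this joint bookkeeping of the $\alpha$-dependent compensation of the sum: one must show the small-jump remainder is uniformly negligible without decoupling it from the simultaneously evolving maximum, which is what ties the two coordinates to the common Poisson limit.
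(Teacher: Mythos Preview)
Your proposal is correct and follows essentially the same route as the paper: point-process convergence $N_n\dto N$, a truncated summation--maximum functional, the continuous mapping theorem, and a Slutsky-type passage $\eps\to0$ with the $\alpha$-dependent compensation handled case by case. The one noteworthy difference is that you truncate \emph{both} coordinates at level $\eps$, which leaves only finitely many atoms and lets you claim $J_1$ continuity of the functional on configurations with distinct jump times; the paper instead truncates only the sum coordinate, keeps the full (non-truncated) maximum, and therefore proves continuity only in the weak $M_2$ topology (via pointwise convergence on a dense set plus monotonicity, invoking Corollary~12.5.1 of~\cite{Whitt02}). Your variant is slightly cleaner and yields a marginally stronger intermediate statement, but the architecture, the small-jump estimates (which the paper sources to Resnick~\cite{Re86} rather than~\cite{BaKr14}), and the identification of the limit are the same.
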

\begin{proof} ({\it Lemma~\ref{t:FinMAlemma01}})
Fix $0 < u < \infty$ and define the sum-maximum functional
$$ \Phi^{(u)} \colon \mathbf{M}_{p}([0,1] \times \EE) \to D([0,1], \mathbb{R}^{2})$$
by
$$ \Phi^{(u)} \Big(\sum_{i}\delta_{(t_{i}, x_{i})} \Big) (t)
  =  \Big( \beta \sum_{t_{i} \leq t}x_{i}\,1_{\{u < |x_{i}| < \infty \}},  \bigvee_{t_{i} \leq t} |x_{i}|(\varphi_{+} 1_{\{x_{i}>0\}} + \varphi_{-} 1_{\{x_{i}<0\}}) \Big)$$
  for $t \in [0,1]$
(here we for convenience set $\sup \emptyset = 0$), where the space $\mathbf{M}_p([0,1] \times \EE)$ of Radon point
measures on $[0,1] \times \EE$ is equipped with the vague
topology. Let $\mathbb{E}_{u} = \mathbb{E} \setminus [-u,u]$ and $\Lambda = \Lambda_{1} \cap \Lambda_{2}$, where
\begin{eqnarray*}
  \Lambda_{1} &=&  \{ \eta \in \mathbf{M}_{p}([0,1] \times \EE) :
   \eta ( \{0,1 \} \times \EE) = 0 = \eta ([0,1] \times \{ \pm \infty, \pm u \}) \} \\[0.6em]
  \Lambda_{2} &=& \{ \eta \in \mathbf{M}_{p}([0,1] \times \EE) :
  \eta ( \{ t \} \times \mathbb{E}_{u} ) \leq 1 \
  \text{for all $t \in [0,1]$} \}.
\end{eqnarray*}
The elements
of $\Lambda_2$ have no two atoms in $[0,1] \times \mathbb{E}_{u}$ with the same time
coordinate.

The functional $\Phi^{(u)}$ is continuous on the set $\Lambda$,
when $D([0,1], \mathbb{R}^{2})$ is endowed with the weak $M_{2}$ topology. Indeed, take an arbitrary $\eta \in \Lambda$ and suppose that $\eta_{n} \vto \eta$ in $\mathbf{M}_p([0,1] \times
\EE)$. We need to show that
$\Phi^{(u)}(\eta_n) \to \Phi^{(u)}(\eta)$ in $D([0,1],
\mathbb{R}^{2})$ according to the $WM_2$ topology. By
Theorem~12.5.2 in Whitt~\cite{Whitt02}, it suffices to prove that,
as $n \to \infty$,
$$ d_{p}(\Phi^{(u)}(\eta_{n}), \Phi^{(u)}(\eta)) =
\max_{k=1, 2}d_{M_{2}}(\Phi^{(u)}_{k}(\eta_{n}),
\Phi^{(u)}_{k}(\eta)) \to 0.$$
Following, with small modifications, the arguments in the proof of Lemma~3.2 in Basrak et al.~\cite{BKS} we obtain
$d_{M_{2}}(\Phi^{(u)}_{1}(\eta_{n}), \Phi^{(u)}_{1}(\eta)) \to 0$ as
$n \to \infty$.
Let
$$T= \{ t \in [0,1] : \eta (\{t\} \times \EE) = 0 \}.$$
Since $\eta$ is a Radon point measure, the set $T$ is dense in $[0,1]$. Fix $t \in T$ and take $\epsilon >0$ such that $\eta([0,t] \times \{\pm \epsilon\})=0$.
Later, when $\epsilon \downarrow 0$, we assume convergence to $0$ is through a sequence of values $(\epsilon_{j})$ such that $\eta([0,t] \times \{\pm \epsilon_{j}\})=0$ for all $j \in \mathbb{N}$ (this can be arranged since $\eta$ is a Radon point measure). Since the set $[0,t] \times \overline{\EE}_{\epsilon}$ is relatively compact in
$[0,1] \times \EE$, there exists a nonnegative integer
$k=k(\eta)$ such that
$$ \eta ([0,t] \times \overline{\EE}_{\epsilon}) = k < \infty.$$
By assumption, $\eta$ does not have any atoms on the border of the
set $[0,t] \times \overline{\EE}_{\epsilon}$. Therefore, by Lemma 7.1 in Resnick~\cite{Resnick07}, there exists a positive integer $n_{0}$ such
that
$$ \eta_{n} ([0,t] \times \overline{\EE}_{\epsilon})=k \qquad \textrm{for all} \ n \geq n_{0}.$$
Let
$(t_{i},x_{i})$ for $i=1,\ldots,k$ be the atoms of $\eta$ in
$[0,t] \times \overline{\EE}_{\epsilon}$. By the same lemma, the $k$ atoms
$(t_{i}^{(n)}, x_{i}^{(n)})$ of $\eta_{n}$ in $[0,t] \times \overline{\EE}_{\epsilon}$ (for $n \geq n_{0}$) can be labeled in such a way that
for every $i \in \{1,\ldots,k\}$ we have
$$ (t_{i}^{(n)}, x_{i}^{(n)}) \to (t_{i},x_{i}) \qquad \textrm{as}
\ n \to \infty.$$ In particular, for any $\delta >0$ we can find a
positive integer $n_{\delta} \geq n_{0}$ such that for all $n \geq
n_{\delta}$,
\begin{equation*}\label{e:etaconv}
 |t_{i}^{(n)} - t_{i}| < \delta \quad \textrm{and} \quad
 |x_{i}^{(n)}- x_{i}| < \delta \qquad \textrm{for} \ i=1,\ldots,k.
\end{equation*}
If $k=0$, then (for large $n$) the atoms of $\eta$ and $\eta_{n}$ in $[0,t] \times \EE$ are all situated in $[0,t] \times (-\epsilon, \epsilon)$. Hence
$ \Phi^{(u)}_{2}(\eta)(t) \in [0,\epsilon)$ and $ \Phi^{(u)}_{2}(\eta_{n})(t) \in [0, \epsilon)$, which imply
\begin{equation}\label{e:conv1}
  |\Phi^{(u)}_{2}(\eta_{n})(t) - \Phi^{(u)}_{2}(\eta)(t)| < \epsilon.
\end{equation}
If $k \geq 1$, take $\delta = \epsilon$. Note that $|x_{i}^{(n)}-x_{i}| < \delta$ implies $x_{i}^{(n)} >0$ iff $x_{i} >0$. Hence we have
\begin{eqnarray}\label{e:conv2}
  \nonumber |\Phi^{(u)}_{2}(\eta_{n})(t) - \Phi^{(u)}_{2}(\eta)(t)| &&\\[0.5em]
   \nonumber &\hspace*{-18em} =& \hspace*{-9em} \bigg| \bigvee_{i=1}^{k}|x_{i}^{(n)}|(\varphi_{+}1_{\{ x_{i}^{(n)}>0\}} + \varphi_{-}1_{\{ x_{i}^{(n)}<0\}}) - \bigvee_{i=1}^{k}|x_{i}|(\varphi_{+}1_{\{ x_{i}>0\}} + \varphi_{-}1_{\{ x_{i}<0\}}) \bigg|\\[0.5em]
   \nonumber & \hspace*{-18em} \leq &\hspace*{-9em} \bigvee_{i=1}^{k} \Big| (|x_{i}^{(n)}|-|x_{i}|) (\varphi_{+}1_{\{ x_{i}>0\}} + \varphi_{-}1_{\{ x_{i}<0\}})\Big| \leq  (\varphi_{+} \vee \varphi_{-}) \bigvee_{i=1}^{k}
    |x_{i}^{(n)}-x_{i}|\\[0.5em]
    & \hspace*{-18em} \leq &\hspace*{-9em} (\varphi_{+} \vee \varphi_{-})\,\epsilon,
\end{eqnarray}
    where the first inequality above follows from the following inequality
 \begin{equation*}\label{e:maxineq}
 \Big| \bigvee_{i=1}^{k}a_{i} - \bigvee_{i=1}^{k}b_{i} \Big| \leq
\bigvee_{i=1}^{k}|a_{i}-b_{i}|,
 \end{equation*}
which holds for arbitrary real numbers $a_{1}, \ldots, a_{k}, b_{1},
\ldots, b_{k}$.
Therefore form (\ref{e:conv1}) and (\ref{e:conv2}) we obtain
 $$\lim_{n \to \infty}|\Phi^{(u)}_{2}(\eta_{n})(t) -
 \Phi^{(u)}_{2}(\eta)(t)|< (\varphi_{+} \vee \varphi_{-} \vee 1)\,\epsilon,$$
  and if we let $\epsilon \to 0$, it follows that
 $\Phi^{(u)}_{2}(\eta_{n})(t) \to \Phi^{(u)}_{2}(\eta)(t)$ as $n \to
 \infty$. Note that $\Phi^{(u)}_{2}(\eta)$ and $\Phi^{(u)}_{2}(\eta_{n})$ are nondecreasing functions. Since, by
  Corollary 12.5.1 in Whitt~\cite{Whitt02}, $M_{1}$ convergence for monotone functions is equivalent to pointwise convergence in a dense subset of points plus convergence at the endpoints, and $M_{1}$ convergence implies $M_{2}$ convergence, we conclude that $d_{M_{2}}(\Phi^{(u)}_{2}(\eta_{n}),
 \Phi^{(u)}_{2}(\eta)) \to 0$ as $n \to \infty$. Hence
$\Phi^{(u)}$ is continuous at $\eta$.

Since the random variables $Z_{i}$ are i.i.d.~and regularly varying, Corollary 6.1 in Resnick~\cite{Resnick07} yields
\begin{equation}\label{e:ppconv}
 N_{n} := \sum_{i=1}^{n}\delta_{(\frac{i}{n}, \frac{Z_{i}}{a_{n}})} \dto N := \sum_{i}\delta_{(t_{i},j_{i})}, \qquad \textrm{as} \ n \to \infty,
\end{equation}
in $\mathbf{M}_{p}([0,1] \times \EE)$, where the limiting point process $N$ is a Poisson process with intensity measure $\emph{Leb} \times \mu$. Since $P( N \in \Lambda)=1$ (see Resnick~\cite{Resnick07}, page 221) and the functional $\Phi^{(u)}$ is continuous on the set $\Lambda$, from (\ref{e:ppconv}) by an application of the continuous mapping theorem we obtain
$ \Phi^{(u)}(N_{n})(\,\cdot\,) \dto \Phi^{(u)}(N)(\,\cdot\,)$ as $n \to \infty$, i.e.
\begin{eqnarray}\label{e:conv11}
 \nonumber L_{n}^{(u)}(\,\cdot\,) & := & \Big( \beta \sum_{i=1}^{\floor{n\,\cdot}} \frac{Z_{i}}{a_{n}} 1_{ \big\{ \frac{|Z_{i}|}{a_{n}} > u \big\} }, \bigvee_{i=1}^{\floor{n\,\cdot}}\frac{|Z_{i}|}{a_{n}} \big(\varphi_{+} 1_{\{ Z_{i}>0 \}} + \varphi_{-} 1_{\{ Z_{i}<0 \}}\big) \Big)\\[0.6em]
  & \hspace*{-5em} \dto & \hspace*{-2.5em} L^{(u)}_{0}(\,\cdot\,) :=
 \Big( \beta \sum_{t_{i} \leq\,\cdot}j_{i}1_{\{|j_{i}| >u \}}, \bigvee_{t_{i} \leq\,\cdot}|j_{i}| \big(\varphi_{+} 1_{\{ j_{i}>0 \}} + \varphi_{-} 1_{\{ j_{i}<0 \}} \big) \Big)
\end{eqnarray}
in $D([0,1], \mathbb{R}^{2})$ under the weak $M_{2}$ topology.
From (\ref{eq:onedimregvar}) we have, as $n \to \infty$,
\begin{eqnarray}\label{e:conv12}
 \nonumber \floor{nt} \mathrm{E} \Big( \frac{Z_{1}}{a_{n}} 1_{ \big\{ u < \frac{|Z_{1}|}{a_{n}} \leq 1 \big\} } \Big) &=& \frac{\floor{nt}}{n} \int_{u < |x| \leq 1}x n \Pr \Big( \frac{Z_{1}}{a_{n}} \in dx \Big) \\[0.6em]
   & \to &  t \int_{u < |x| \leq 1} x\mu(dx)
\end{eqnarray}
for every $t \in [0,1]$, and this convergence is uniform in $t$.
From (\ref{e:conv11}) and (\ref{e:conv12}), applying lemma~\ref{l:weakM2transf}, we obtain, as $n \to \infty$,
\begin{equation}\label{e:conv14}
\widetilde{L}_{n}^{(u)}(\,\cdot\,) \dto  L_{0}^{(u)}(\,\cdot\,) - x^{(u)}(\,\cdot\,)
\end{equation}
 in $D([0,1], \mathbb{R}^{2})$
 with the weak $M_{2}$ topology, where
$$ \widetilde{L}_{n}^{(u)}(t) = \Big( \beta \sum_{i=1}^{\floor{nt}} \frac{Z_{i}}{a_{n}} 1_{ \big\{ \frac{|Z_{i}|}{a_{n}} > u \big\} } - \beta \floor{nt} \mathrm{E} \Big( \frac{Z_{1}}{a_{n}} 1_{ \big\{ u < \frac{|Z_{1}|}{a_{n}} \leq 1 \big\} } \Big), \bigvee_{i=1}^{\floor{nt}}\frac{|Z_{i}|}{a_{n}} \big(\varphi_{+} 1_{\{ Z_{i}>0 \}} + \varphi_{-} 1_{\{ Z_{i}<0 \}}\big) \Big)$$
for $t \in [0,1]$,
and
$$x^{(u)}(t)= (ta_{u}, 0), \qquad a_{u} = \beta \int_{u < |x| \leq 1} x\mu(dx).$$

From the It\^{o} representation of a L\'{e}vy process (see Section 5.5.3 in Resnick~\cite{Resnick07} or Theorem 19.2 in Sato~\cite{Sa99}), there exists a L\'{e}vy process $V_{0}(\,\cdot\,)$ with characteristic triple $(0, \mu, 0)$ such that, as $u \to 0$,
$$ \sup_{t \in [0,1]} |L_{0\,1}^{(u)}(t) - ta_{u} - \beta V_{0}(t)| \stas 0.$$
Since uniform convergence implies (weak) $M_{2}$ convergence, it immediately follows that
$$d_{p}(L_{0}^{(u)}(\,\cdot\,) - x^{(u)}(\,\cdot\,), L(\,\cdot\,)) \to 0$$
almost surely as $u \to 0$, where
$$L(t) := \Big( \beta V_{0}(t), \bigvee_{t_{i} \leq t}|j_{i}| \big( \varphi_{+} 1_{\{ j_{i}>0 \}} + \varphi_{-} 1_{\{ j_{i}<0 \}} \big) \Big), \qquad t \in [0,1].$$
From this, since almost sure convergence implies convergence in distribution, we obtain, as $u \to 0$,
\begin{equation}\label{e:conv15}
 L_{0}^{(u)}(\,\cdot\,) - x^{(u)}(\,\cdot\,) \dto  L(\,\cdot\,)
\end{equation}
in $D([0,1], \mathbb{R}^{2})$ with the weak $M_{2}$ topology. Since $\sum_{i}\delta_{(t_{i},j_{i})}$ is a Poisson process with intensity measure $\emph{Leb} \times \mu$, the process
$$ W(t) :=  \bigvee_{t_{i} \leq t}|j_{i}| \big( \varphi_{+} 1_{\{ j_{i}>0 \}} + \varphi_{-} 1_{\{ j_{i}<0 \}} \big), \qquad t \in [0,1],$$
is an extremal process with exponent measure $\nu(dx)= (\varphi_{+}^{\alpha}p + \varphi_{-}^{\alpha}r) \alpha x^{-\alpha-1}1_{(0,\infty)}(x)\,dx$ (see Resnick~\cite{Re87}, Section 4.5, and Resnick~\cite{Resnick07}, page 161).

Let
$$  \widetilde{L}_{n}^{Z}(t) := \bigg( \sum_{i=1}^{\floor {nt}}\frac{\beta Z_{i}}{a_{n}} - \beta \floor{nt} \mathrm{E} \Big( \frac{Z_{1}}{a_{n}} 1_{ \big\{ \frac{|Z_{1}|}{a_{n}} \leq 1 \big\} } \Big), \bigvee_{i=1}^{\floor {nt}}\frac{ |Z_{i}|}{a_{n}} \big( \varphi_{+} 1_{\{ Z_{i}>0 \}} + \varphi_{-} 1_{\{ Z_{i}<0 \}} \big) \bigg)$$
for $t \in [0,1]$. If we show that
$$ \lim_{u \to 0} \limsup_{n \to \infty} \Pr [ d_{p}(\widetilde{L}_{n}^{Z}, \widetilde{L}_{n}^{(u)}) > \delta ]=0$$
for any $\delta > 0$, then from (\ref{e:conv14}), (\ref{e:conv15}) and a generalization of Slutsky's theorem (see Theorem 3.5 in Resnick~\cite{Resnick07}) we will have $\widetilde{L}_{n}^{Z} \dto  L$ as $n \to \infty$, in $D([0,1], \mathbb{R}^{2})$ with the weak $M_{2}$ topology.
Recalling the definitions and the fact that the metric $d_{M_{2}}$ is bounded above by the uniform metric, we have
\begin{eqnarray*}
 \Pr [ d_{p}(\widetilde{L}_{n}^{Z}, \widetilde{L}_{n}^{(u)}) > \delta ] &   & \\[0.6em]
   & \hspace*{-12em} \leq & \hspace*{-6em}  \Pr \bigg( \sup_{t \in [0,1]} \bigg| \sum_{i=1}^{\floor{nt}}\frac{\beta Z_{i}}{a_{n}} 1_{ \big\{ \frac{|Z_{i}|}{a_{n}} \leq u \big\}} - \beta \floor{nt} \mathrm{E} \Big( \frac{Z_{1}}{a_{n}} 1_{ \big\{ \frac{|Z_{i}|}{a_{n}} \leq u \big\}} \Big) \bigg|  > \delta \bigg)\\[0.6em]
& \hspace*{-12em} = & \hspace*{-6em}  \Pr \bigg( \max_{k=1,\ldots,n} \bigg| \sum_{i=1}^{k} \frac{ Z_{i}}{a_{n}} 1_{ \big\{ \frac{|Z_{i}|}{a_{n}} \leq u \big\}} - k \mathrm{E} \Big( \frac{Z_{1}}{a_{n}} 1_{ \big\{ \frac{|Z_{i}|}{a_{n}} \leq u \big\}} \Big) \bigg|  > \delta \beta^{-1} \bigg).
\end{eqnarray*}
In the i.i.d.~case it holds
$$ \lim_{u \to \infty} \limsup_{n \to \infty} \Pr \bigg( \max_{k=1,\ldots,n} \bigg| \sum_{i=1}^{k} \frac{ Z_{i}}{a_{n}} 1_{ \big\{ \frac{|Z_{i}|}{a_{n}} \leq u \big\}} - k \mathrm{E} \Big( \frac{Z_{1}}{a_{n}} 1_{ \big\{ \frac{|Z_{i}|}{a_{n}} \leq u \big\}} \Big) \bigg|  > \delta \beta^{-1} \bigg) = 0$$
(see the proof of Proposition 3.4 in Resnick~\cite{Re86}), and therefore, as $n \to \infty$,
\begin{equation}\label{e:conv16}
\widetilde{L}^{Z}_{n}(\,\cdot\,) \dto L(\,\cdot\,)
\end{equation}
in $D([0,1], \mathbb{R}^{2})$ with the weak $M_{2}$ topology.

Note that when $\alpha=1$ we have $\widetilde{L}^{Z}_{n} = L^{Z}_{n}$ (since $Z_{1}$ is symmetric) and the statement of the lemma holds. Therefore assume first $\alpha \in (0,1)$.
By Karamata's theorem, as $n \to \infty$,
$$ \floor{nt} \mathrm{E} \Big( \frac{Z_{1}}{a_{n}} 1_{ \big\{ \frac{|Z_{1}|}{a_{n}} \leq 1 \big\} } \Big) \to  t (p-r) \frac{\alpha}{1-\alpha}$$
for every $t \in [0,1]$. From this and (\ref{e:conv16}), applying Lemma~\ref{l:weakM2transf}, we obtain, as $n \to \infty$,
$$ \widetilde{L}^{Z}_{n}(\,\cdot\,) + \bigg( \beta \floor{n\,\cdot\,} \mathrm{E} \Big( \frac{Z_{1}}{a_{n}} 1_{ \big\{ \frac{|Z_{1}|}{a_{n}} \leq 1 \big\} } \Big), 0 \bigg) \dto L(\,\cdot\,) + \Big(  (\cdot) \beta (p-r) \frac{\alpha}{1-\alpha}, 0 \Big),$$
i.e.
\begin{equation}\label{e:conv17}
L_{n}^{Z}(\,\cdot\,) \dto \Big( \beta V_{0}(\,\cdot\,) + (\cdot) \beta (p-r) \frac{\alpha}{1-\alpha}, W(\,\cdot\,) \Big)
\end{equation}
in $D([0,1], \mathbb{R}^{2})$ with the weak $M_{2}$ topology. Put
$$V(t) := V_{0}(t) + t (p-r) \frac{\alpha}{1-\alpha}, \qquad t \in [0,1],$$
and note that (\ref{e:pomkonv}) holds in this case, since the characteristic triple of the L\'{e}vy process $V$ is $(0, \mu, (p-r) \alpha /(1-\alpha))$.

Finally assume $\alpha \in (1,2)$. By Karamata's theorem, as $n \to \infty$,
$$ \floor{nt} \mathrm{E} \Big( \frac{Z_{1}}{a_{n}} 1_{ \big\{ \frac{|Z_{1}|}{a_{n}} > 1 \big\}} \Big) \to t(p-r) \frac{\alpha}{\alpha -1},$$
for every $t \in [0,1]$. Therefore a new application of Lemma~\ref{l:weakM2transf} to (\ref{e:conv16})  yields, as $n \to \infty$,
$$ \widetilde{L}^{Z}_{n}(\,\cdot\,) - \bigg( \beta \floor{n\,\cdot\,} \mathrm{E} \Big( \frac{Z_{1}}{a_{n}} 1_{ \big\{ \frac{|Z_{1}|}{a_{n}} > 1 \big\} } \Big), 0 \bigg) \dto L(\,\cdot\,) - \Big(  (\cdot) \beta (p-r) \frac{\alpha}{\alpha-1}, 0 \Big),$$
i.e.
\begin{equation*}\label{e:conv20}
 L_{n}^{Z}(\,\cdot\,) \dto (\beta V(\,\cdot\,), W(\,\cdot\,))
\end{equation*}
in $D([0,1], \mathbb{R}^{2})$ with the weak $M_{2}$ topology, and this concludes the proof.
\end{proof}

\begin{rem}
From the proof of Lemma~\ref{t:FinMAlemma01} it follows that the components of the limiting process $(\beta V, W)$ can be expressed as functionals of the limiting point process $N = \sum_{i}\delta_{(t_{i},j_{i})}$ from relation (\ref{e:ppconv}), i.e.
$$ V(\,\cdot\,) = \lim_{u \to 0} \Big( \sum_{t_{i} \leq\,\cdot}j_{i}1_{\{|j_{i}| >u \}} - (\,\cdot\,) \int_{u < |x| \leq 1} x\mu(dx) \Big) + (\,\cdot\,) (p-r) \frac{\alpha}{1-\alpha} 1_{\{ \alpha \neq 0 \}},$$
where the limit holds almost surely uniformly on $[0,1]$, and
$$ W(\,\cdot\,) = \bigvee_{t_{i} \leq\,\cdot}|j_{i}| \big( \varphi_{+} 1_{\{ j_{i}>0 \}} + \varphi_{-} 1_{\{ j_{i}<0 \}} \big).$$
$N$ is a Poisson process with intensity measure $\emph{Leb} \times \mu$, and by using standard Poisson point process transformations (see proposition 5.2 and 5.3 in Resnick~\cite{Resnick07}) it can also be represented as
$$ N = \sum_{i}\delta_{(t_{i},P_{i}Q_{i})},$$
where
\begin{itemize}
  \item[(i)] $\sum_{i=1}^{\infty}\delta_{(t_{i}, P_{i})}$ is a Poisson point process on $[0,1] \times (0,\infty]$ with intensity measure $\emph{Leb} \times d(-x^{-\alpha})$;
  \item[(ii)] $(Q_{i})_{i \in \mathbb{N}}$ is a sequence of i.i.d.~random variables, independent of $\sum_{i=1}^{\infty}\delta_{(t_{i}, P_{i})}$, such that $\Pr(Q_{1}=1)=p$ and $\Pr(Q_{1}=-1)=r$.
\end{itemize}
\end{rem}

\begin{rem}
Lemma~\ref{t:FinMAlemma01} shows that the process $L_{n}^{Z}$  converges to $(\beta V, W)$ in the space $D([0,1], \mathbb{R}^{2})$ endowed with the weak $M_{2}$ topology. If we show that $L_{n}^{Z}$ is close to $L_{n}$ in a weak $M_{2}$ sense, then by the so called converging together result (i.e. Slutsky's theorem) it will follow that $L_{n}$ converges to the same limiting process. This is carried out in detail in the proof of Theorem~\ref{t:FinMA} below.

Heuristically, for a finite order moving average $X_{t}=\sum_{j=0}^{q}\varphi_{j}Z_{t-j}$, most of the sequence $Z_{i,n} :=Z_{i}/a_{n}$ is negligible, except for "big values" $Z_{i_{0},n}, Z_{i_{1},n}, \ldots, Z_{i_{k},n}, \ldots$ which are spread far apart. Note that a "big value" $Z_{i_{m},n}$ produces $q+1$ consecutive "big values" in the sequence $X_{t,n}=\sum_{j=0}^{q}\varphi_{j}Z_{t-j,n}$:
\begin{equation}\label{e:heur}
 X_{i_{m},n} \approx \varphi_{0}Z_{i_{m},n}, \quad X_{i_{m}+1,n} \approx \varphi_{1}Z_{i_{m},n}, \ldots, \ X_{i_{m}+q,n} \approx \varphi_{q}Z_{i_{m},n}.
\end{equation}
These values cover an interval on the $x$ axis of length $q/n$, and their sum is approximated well by $\sum_{j=0}^{q}\varphi_{j}Z_{i_{m},n} = \beta Z_{i_{m},n}$ when $n \to \infty$, showing that $V_{n}^{Z}$ is a suitable approximation of $V_{n}$.

As for the maxima process, a "big value" $\varphi_{j}Z_{i_{m},n}$ has an effect on $W_{n}$ only if it is positive, i.e. if $\varphi_{j}$ and $Z_{i_{m},n}$ are of the same sign. Hence the maximum of the values $X_{i_{m}+j,n}$ in (\ref{e:heur}) is approximated well by
\begin{eqnarray*}
  \bigvee_{j=0}^{q} \varphi_{j} Z_{i_{m},n} & = & \bigvee_{j=0}^{q} \varphi_{j}Z_{i_{m},n} \big( 1_{\{ \varphi_{j}>0,\,Z_{i_{m},n} >0 \}} + 1_{\{ \varphi_{j}<0,\,Z_{i_{m},n} <0 \}} \big)  \\[0.5em]
   &=& \bigvee_{j=0}^{q} |Z_{i_{m},n}| \big( \varphi_{j} 1_{\{ \varphi_{j}>0,\,Z_{i_{m},n} >0 \}} - \varphi_{j}  1_{\{ \varphi_{j}<0,\,Z_{i_{m},n} <0 \}} \big)\\[0.5em]
   & = & |Z_{i_{m},n}| \big( \varphi_{+} 1_{\{\,Z_{i_{m},n} >0 \}} + \varphi_{-} 1_{\{\,Z_{i_{m},n} <0 \}} \big)
\end{eqnarray*}
when $n \to \infty$, showing that $W_{n}^{Z}$ is an appropriate approximation of $W_{n}$.
\end{rem}
\smallskip

\begin{proof} ({\it Theorem~\ref{t:FinMA}})
We prove the theorem first for finite order moving average processes and then for infinite order moving averages. Hence, fix $q \in \mathbb{N}$ and let
$X_{i}=\sum_{j=0}^{q}\varphi_{j}Z_{i-j}$, $i \in \mathbb{Z}$. In this case condition (\ref{eq:InfiniteMAcond}) reduces to
\be\label{eq:FiniteMAcond}
0 \le \sum_{i=0}^{s}\varphi_{i} \Bigg/ \sum_{i=0}^{q}\varphi_{i} \le 1 \qquad \textrm{for every} \ s=0, 1, \ldots, q.
\ee
If we show  that for every $\delta >0$
$$ \lim_{n \to \infty} \Pr[d_{p}(L_{n}^{Z}, L_{n}) > \delta]=0,$$
then from Lemma~\ref{t:FinMAlemma01} by an application of Slutsky's theorem we will obtain $L_{n}(\,\cdot\,) \dto (\beta V(\,\cdot\,), W(\,\cdot\,))$ as $n \to \infty$,
in $D([0,1], \mathbb{R}^{2})$ endowed with the weak $M_{2}$ topology.
From the definition of the metric $d_{p}$ in (\ref{e:defdp}) it suffices to show that
\begin{equation}\label{e:BKsum}
\lim_{n \to \infty} \Pr[d_{M_{2}}(V_{n}^{Z}, V_{n}) > \delta]=0
\end{equation}
and
\begin{equation}\label{e:max1}
\lim_{n \to \infty} \Pr[d_{M_{2}}(W_{n}^{Z}, W_{n}) > \delta]=0.
\end{equation}
Relation (\ref{e:BKsum}) is established in the proof of Theorem 2.1 in Basrak and Krizmani\'{c}~\cite{BaKr14}. It remains to show (\ref{e:max1}).

Fix $\delta >0$ and let $n \in \mathbb{N}$ be large enough, i.e.
 $n > \max\{2q, 2q/\delta \}$.
Then by the definition of the metric $d_{M_{2}}$, we have
\begin{eqnarray*}
  d_{M_{2}}(W_{n}^{Z}, W_{n}) &=& \bigg(\sup_{v \in \Gamma_{W_{n}^{Z}}} \inf_{z \in \Gamma_{ W_{n}}} d(v,z) \bigg) \vee \bigg(\sup_{v \in \Gamma_{ W_{n}}} \inf_{z \in \Gamma_{W_{n}^{Z}}} d(v,z) \bigg) \\[0.4em]
   &= :& Y_{n} \vee T_{n}.
\end{eqnarray*}
Hence
\be\label{eq:AB}
\Pr [d_{M_{2}}(W_{n}^{Z}, W_{n})> \delta ] \leq \Pr(Y_{n}>\delta) + \Pr(T_{n}>\delta).
\ee
Now, we estimate the first term on the right hand side of (\ref{eq:AB}).
Let
$$ D_{n} = \{\exists\,v \in \Gamma_{W_{n}^{Z}} \ \textrm{such that} \ d(v,z) > \delta \ \textrm{for every} \ z \in \Gamma_{ W_{n}} \}.$$
Then by the definition of $Y_{n}$
\begin{equation}\label{e:Yn}
 \{Y_{n} > \delta\} \subseteq D_{n}.
\end{equation}
On the event $D_{n}$ it holds that $d(v, \Gamma_{ W_{n}})> \delta$. Let $v=(t_{v},x_{v})$. Then
\begin{equation}\label{e:i*}
 \Big| W_{n}^{Z} \Big( \frac{i^{*}}{n} \Big) - W_{n} \Big( \frac{i^{*}}{n} \Big) \Big| > \delta,
\end{equation}
where $i^{*}=\floor{nt_{v}}$ or $i^{*}=\floor{nt_{v}}-1$. Indeed, it holds that $t_{v} \in [i/n, (i+1)n)$ for some $i \in \{1,\ldots,n-1\}$ (or $t_{v}=1$). If $x_{v} = W_{n}^{Z}(i/n)$ (i.e. $v$ lies on a horizontal part of the completed graph), then clearly
$$\Big| W_{n}^{Z} \Big( \frac{i}{n} \Big) -  W_{n} \Big( \frac{i}{n} \Big) \Big| \geq d(v,  \Gamma_{ W_{n}}) > \delta,$$
and we put $i^{*}=i$.
 On the other hand, if $x_{v} \in [W_{n}^{Z}((i-1)/n), W_{n}^{Z}(i/n))$ (i.e. $v$ lies on a vertical part of the completed graph), one can similarly show that
 $$ \Big| W_{n}^{Z} \Big(\frac{i-1}{n} \Big) -  W_{n} \Big(\frac{i-1}{n} \Big) \Big| > \delta \qquad \textrm{if} \ W_{n} \Big( \frac{i^{*}}{n} \Big) > x_{v},$$
 and
 $$ \Big| W_{n}^{Z} \Big(\frac{i}{n} \Big) -  W_{n} \Big(\frac{i}{n} \Big) \Big| > \delta \qquad \textrm{if} \ W_{n} \Big( \frac{i^{*}}{n} \Big) < x_{v}.$$
In the first case put $i^{*}=i-1$ and in the second $i^{*}=i$.
Note that $i= \floor{nt_{v}}$, and therefore (\ref{e:i*}) holds.
Moreover, since $|i^{*}/n -(i^{*}+l)/n| \leq q/n \leq \delta$ for every $l=1,\ldots,q$ (such that $i^{*}+l \leq n$), from the definition of the set $D_{n}$ one can similarly conclude that
\begin{equation}\label{e:i*q}
 \Big| W_{n}^{Z} \Big( \frac{i^{*}}{n} \Big) - W_{n} \Big( \frac{i^{*}+l}{n} \Big) \Big| > \delta.
\end{equation}
Put $\gamma = \varphi_{+} \vee \varphi_{-}$.
We claim that
\begin{equation}\label{e:estim1}
 D_{n} \subseteq H_{n, 1} \cup H_{n, 2} \cup H_{n, 3} \cup H_{n, 4},
\end{equation}
where
\begin{eqnarray}
 \nonumber H_{n, 1} & = & \bigg\{ \exists\,l \in \{-q, \ldots, 0\} \ \textrm{such that} \ \frac{ |Z_{l}|}{a_{n}} > \frac{\delta}{4(q+1) \gamma} \bigg\},\\[0.4em]
 \nonumber H_{n, 2} & = & \bigg\{ \exists\,l \in \{1,\ldots,q\} \cup \{n-q+1, \ldots, n\} \ \textrm{such that} \ \frac{ |Z_{l}|}{a_{n}} > \frac{\delta}{4(q+1) \gamma} \bigg\},\\[0.4em]
  \nonumber H_{n, 3} & = & \bigg\{ \exists\,k \in \{1, \ldots, n\} \ \textrm{and} \ \exists\,l \in \{k-q,\ldots,k+q\} \setminus \{k\} \ \textrm{such that}\\[0.4em]
  \nonumber & & \ \frac{ |Z_{k}|}{a_{n}} > \frac{\delta}{4(q+1) \gamma} \ \textrm{and} \  \frac{ |Z_{l}|}{a_{n}} > \frac{\delta}{4(q+1) \gamma}  \bigg\},\\[0.4em]
  \nonumber H_{n, 4} & = & \bigg\{ \exists\,k \in \{1, \ldots, n\}, \ \exists\,j \in \{1,\ldots,n\} \setminus \{k,\ldots,k+q\}, \ \exists\,l_{1} \in \{0,\ldots,q\}\\[0.4em]
  \nonumber & & \textrm{and} \ \exists\,l \in \{0,\ldots,q\} \setminus \{l_{1}\} \ \textrm{such that} \ \frac{ |Z_{k}|}{a_{n}} > \frac{\delta}{4(q+1) \gamma},\\[0.4em]
  \nonumber & & \ \frac{ |Z_{j-l_{1}}|}{a_{n}} > \frac{\delta}{4(q+1) \gamma} \ \textrm{and} \  \frac{ |Z_{j-l}|}{a_{n}} > \frac{\delta}{4(q+1) \gamma}  \bigg\}.
\end{eqnarray}
To prove (\ref{e:estim1}) it suffices to show that
$$ D_{n} \cap (H_{n, 1} \cup H_{n, 2} \cup H_{n, 3})^{c} \subseteq H_{n, 4}.$$
Thus assume the event $D_{n} \cap (H_{n, 1} \cup H_{n, 2} \cup H_{n, 3})^{c}$ occurs. Then necessarily $W_{n}^{Z}(i^{*}/n) > \delta / [4(q+1)]$. Indeed, if $W_{n}^{Z}(i^{*}/n) \leq \delta / [4(q+1)]$, i.e.
$$ \bigvee_{j=1}^{i^{*}}\frac{ |Z_{j}|}{a_{n}} \big(\varphi_{+} 1_{\{ Z_{j}>0 \}} + \varphi_{-} 1_{\{ Z_{j}<0 \}} \big)   = W_{n}^{Z} \Big( \frac{i^{*}}{n} \Big) \leq \frac{\delta}{4(q+1)},$$
then for every $s \in \{ q+1, \ldots, i^{*}\}$ we have
\begin{eqnarray}\label{e:phipm}
  \nonumber \frac{X_{s}}{a_{n}} & \leq & \sum_{j=0}^{q} \frac{\varphi_{j} Z_{s-j}}{a_{n}} \leq  \sum_{j=0}^{q}  \frac{|Z_{s-j}|}{a_{n}} \big(\varphi_{+} 1_{\{ Z_{s-j}>0 \}} + \varphi_{-} 1_{\{ Z_{s-j}<0 \}} \big)\\[0.5em]
  & \leq & \frac{\delta}{4(q+1) }\,(q+1) = \frac{\delta}{4}.
\end{eqnarray}
Since the event $H_{n, 1}^{c} \cap H_{n, 2}^{c}$ occurs, for every $s \in \{1, \ldots, q\}$ we also have
\begin{eqnarray}\label{e:phipm2}
\nonumber  \frac{|X_{s}|}{a_{n}} & \leq & \sum_{j=0}^{q}|\varphi_{j}| \frac{|Z_{s-j}|}{a_{n}} \leq \frac{\delta}{4(q+1) \gamma} \sum_{j=0}^{q}|\varphi_{j}| \\[0.5em]
   & \leq & \frac{\delta}{4(q+1) \gamma}  \cdot (q+1) \gamma = \frac{ \delta}{4},
\end{eqnarray}
yielding
\begin{equation}\label{e:phipm3}
 -\frac{\delta}{4} \leq \frac{X_{1}}{a_{n}} \leq W_{n} \Big( \frac{i^{*}}{n} \Big) = \bigvee_{s=1}^{i^{*}}\frac{X_{s}}{a_{n}}  \leq \frac{\delta}{4}.
 \end{equation}
Hence
$$ \Big| W_{n}^{Z} \Big( \frac{i^{*}}{n} \Big) - W_{n} \Big( \frac{i^{*}}{n} \Big) \Big| \leq  \Big| W_{n}^{Z} \Big( \frac{i^{*}}{n} \Big) \Big| + \Big| W_{n} \Big( \frac{i^{*}}{n} \Big)\Big| \leq \frac{\delta}{4(q+1)} + \frac{\delta}{4} \leq \frac{\delta}{2},$$
which is in contradiction with (\ref{e:i*}).

Therefore $W_{n}^{Z}(i^{*}/n) > \delta / [4(q+1)]$.
This implies the existence of some $k \in \{1,\ldots,i^{*}\}$ such that
\begin{equation}\label{e:est31}
 W_{n}^{Z} \Big( \frac{i^{*}}{n} \Big) = \frac{ |Z_{k}|}{a_{n}} \big(\varphi_{+} 1_{\{ Z_{k}>0 \}} + \varphi_{-} 1_{\{ Z_{k}<0 \}} \big) > \frac{\delta}{4(q+1)}.
\end{equation}
Therefore
$$ \frac{|Z_{k}|}{a_{n}} \geq  \frac{|Z_{k}|}{a_{n}} \frac{\varphi_{+} 1_{\{ Z_{k}>0 \}} + \varphi_{-} 1_{\{ Z_{k}<0 \}} }{ \varphi_{+} \vee \varphi_{-} } > \frac{\delta}{4(q+1) \gamma},$$
and since $H_{n, 2}^{c}$ occurs, it follows that $q+1 \leq k \leq n-q$. Since $H_{n, 3}^{c}$ occurs, it holds that
\begin{equation}\label{e:est51}
\frac{ |Z_{l}|}{a_{n}}  \leq \frac{\delta}{4(q+1) \gamma} \qquad \textrm{for all} \ l \in \{k-q,\ldots,k+q\} \setminus \{k\}.
\end{equation}

Now we claim that $W_{n}(i^{*}/n) = X_{j}/a_{n}$ for some $j \in \{1,\ldots,i^{*}\} \setminus \{k,\ldots,k+q\}$. If this is not the case, then $W_{n}(i^{*}/n) = X_{j}/a_{n}$ for some $j \in \{k,\ldots,k+q\}$ (with $j \leq i^{*}$).
Here we distinguish two cases:
\begin{itemize}
\item[(i)] $k+q \leq i^{*}$. On the event $\{ Z_{k}>0 \}$ it holds that
 $$ |Z_{k}| \big(\varphi_{+} 1_{\{ Z_{k}>0 \}} + \varphi_{-} 1_{\{ Z_{k}<0 \}} \big) = \varphi_{+}Z_{k} = \varphi_{j_{0}}Z_{k}$$
 for some $j_{0} \in \{0,\ldots, q\}$ (with $\varphi_{j_{0}} \geq 0$).
Since $k+j_{0} \leq i^{*}$, we have
\begin{equation}\label{e:est6}
\frac{X_{j}}{a_{n}} = W_{n}  \Big( \frac{i^{*}}{n} \Big) \geq \frac{X_{k+j_{0}}}{a_{n}}.
\end{equation}
Taking into account the assumptions that hold in this case, we can write
$$ \frac{X_{j}}{a_{n}} = \frac{\varphi_{j-k}Z_{k}}{a_{n}} + \sum_{\scriptsize \begin{array}{c}
                          s=0  \\
                          s \neq j-k
                        \end{array}}^{q} \frac{\varphi_{s}Z_{j-s}}{a_{n}} =: \frac{\varphi_{j-k}Z_{k}}{a_{n}} + F_{1},$$
and
$$ \frac{X_{k+j_{0}}}{a_{n}} = \frac{\varphi_{j_{0}}Z_{k}}{a_{n}} + \sum_{\scriptsize \begin{array}{c}
                          s=0 \\
                          s \neq j_{0}
                        \end{array}}^{q}\frac{\varphi_{s}Z_{k+j_{0}-s}}{a_{n}} =: \frac{\varphi_{j_{0}}Z_{k}}{a_{n}} + F_{2}.$$
From relation (\ref{e:est51}) (similarly as in (\ref{e:phipm2})) we obtain
$$ |F_{1}| \leq  \frac{\delta}{4(q+1) \gamma} \cdot q \gamma < \frac{\delta}{4},$$
and similarly $|F_{2}| < \delta/4$. Since
$ \varphi_{j_{0}} - \varphi_{j-k} = \varphi_{+} - \varphi_{j-k} \geq 0$,
 from (\ref{e:est6}) it follows that
$$ 0 \leq  \frac{\varphi_{j_{0}} Z_{k} - \varphi_{j-k}Z_{k}}{a_{n}} \leq F_{1}-F_{2} \leq |F_{1}| + |F_{2}| < \frac{\delta}{2}.$$
By (\ref{e:i*}) we have
$$ \Big| \frac{\varphi_{j_{0}} Z_{k}}{a_{n}} - \frac{X_{j}}{a_{n}} \Big| = \Big| W_{n}^{Z} \Big( \frac{i^{*}}{n} \Big) - W_{n} \Big( \frac{i^{*}}{n} \Big) \Big| > \delta,$$
and hence
$$ \delta < \Big| \frac{\varphi_{j_{0}} Z_{k}}{a_{n}} -  \frac{\varphi_{j-k}Z_{k}}{a_{n}} - F_{1} \Big| \leq \Big| \frac{\varphi_{j_{0}} Z_{k}}{a_{n}} -  \frac{\varphi_{j-k}Z_{k}}{a_{n}} \Big| + |F_{1}| < \frac{\delta}{2} + \frac{\delta}{4} = \frac{3\delta}{4},$$
which is not possible. On the event $\{ Z_{k} <0 \}$ it holds that
 $ |Z_{k}| \big(\varphi_{+} 1_{\{ Z_{k}>0 \}} + \varphi_{-} 1_{\{ Z_{k}<0 \}} \big) = \varphi_{-}|Z_{k}| = \varphi_{i_{0}}Z_{k}$
 for some $i_{0} \in \{0,\ldots, q\}$ (with $\varphi_{i_{0}} \leq 0$). Repeating the arguments as before we similarly arrive at a contradiction. Therefore this case can not happen.
\item[(ii)] $k+q > i^{*}$. Note that in this case $k \leq j \leq i^{*} < k+q$. Let $s_{0} \in \{1,\ldots,q\}$ be such that $i^{*}+s_{0}=k+q$. Let
$$ W_{n} \Big( \frac{i^{*}+s_{0}}{n} \Big) = \frac{X_{p}}{a_{n}},$$
for some $p \leq k+q$. Since $W_{n}(i^{*}/n) \leq W_{n}((i^{*}+s_{0})/n)$, it holds that $j \leq p$. Then
$$ \frac{X_{p}}{a_{n}} = W_{n}  \Big( \frac{k+q}{n} \Big) \geq \frac{X_{k+j_{0}}}{a_{n}} \vee \frac{X_{k+i_{0}}}{a_{n}} $$
for $j_{0}$ and $i_{0}$ as in (i). By (\ref{e:i*q}) we have
$$ \Big| \frac{ |Z_{k}|}{a_{n}} \big(\varphi_{+} 1_{\{ Z_{k}>0 \}} + \varphi_{-} 1_{\{ Z_{k}<0 \}} \big)  - \frac{X_{p}}{a_{n}} \Big| = \Big| W_{n}^{Z} \Big( \frac{i^{*}}{n} \Big) - W_{n} \Big( \frac{i^{*}+s_{0}}{n} \Big) \Big| > \delta,$$
and repeating the arguments as in (i) (with $p$ instead of $j$ and $i^{*} + s_{0}$ instead of $i^{*}$) we conclude that this case also can not happen.
\end{itemize}
Hence indeed $W_{n}(i^{*}/n) = X_{j}/a_{n}$ for some $j \in \{1,\ldots,i^{*}\} \setminus \{k,\ldots,k+q\}$. Now we have three cases: A--all random variables $Z_{j-q}, \ldots, Z_{j}$ are "small", B--exactly one is "large" and C--at least two of them are "large" ($Z$ is "small" if $ |Z| /a_{n} \leq \delta/[4(q+1) \gamma]$, otherwise it is "large"). We will show that the first two cases are not possible.
\begin{itemize}
  \item[Case A:] $ |Z_{j-l}|/a_{n} \leq \delta/[4(q+1) \gamma]$ for every $l=0,\ldots,q$.
 This yields (as in (\ref{e:phipm2}))
$$ \Big| W_{n} \Big( \frac{i^{*}}{n} \Big) \Big| = \frac{|X_{j}|}{a_{n}} \leq  \frac{\delta}{4}.$$
Let $j_{0}$ and $i_{0}$ be as in (i) above (we take $j_{0}$ on the set $\{Z_{k}>0\}$ and $i_{0}$ on the set $\{Z_{k}<0\}$). If $k+q \leq i^{*}$, then
$$ \frac{X_{j}}{a_{n}} \geq \frac{X_{k+j_{0}}}{a_{n}} = \frac{\varphi_{j_{0}}Z_{k}}{a_{n}} + F_{2},$$
where $F_{2}$ is as in (i) above, with $|F_{2}| < \delta/4$. Therefore
$$  \frac{\varphi_{j_{0}}Z_{k}}{a_{n}} \leq \frac{X_{j}}{a_{n}} - F_{2} \leq \frac{|X_{j}|}{a_{n}} + |F_{2}| < \frac{\delta}{4} + \frac{\delta}{4} = \frac{\delta}{2},$$
and
$$ \Big| W_{n}^{Z} \Big( \frac{i^{*}}{n} \Big) - W_{n} \Big( \frac{i^{*}}{n} \Big) \Big| = \Big| \frac{\varphi_{j_{0}} Z_{k}}{a_{n}} - \frac{X_{j}}{a_{n}} \Big| \leq \frac{\varphi_{j_{0}} Z_{k}}{a_{n}} + \frac{|X_{j}|}{a_{n}} < \frac{\delta}{2} + \frac{\delta}{4} = \frac{3\delta}{4},$$
 which is in contradiction with (\ref{e:i*}). The same conclusion follows if $j_{0}$ is replaced by $i_{0}$ On the other hand, if $k+q > i^{*}$, let $s_{0}$ be as in (ii) above. Then, when $W_{n}((i^{*}+s_{0})/n)=X_{j}/a_{n}$, we similarly obtain a contradiction with (\ref{e:i*q}). Alternatively, when $W_{n}((i^{*}+s_{0})/n)=X_{p}/a_{n}$ for some $p \in \{i^{*}, \ldots, i^{*}+s_{0}\}$, in the same manner as in (ii) above we get a contradiction. Thus this case can not happen.
  \item[Case B:]  There exists $l_{1} \in \{0,\ldots,q\}$ such that $ |Z_{j-l_{1}}|/a_{n} > \delta/[4(q+1)\gamma]$ and  $ |Z_{j-l}|/a_{n} \leq \delta/[4(q+1)\gamma]$ for every $l \in \{0,\ldots,q\} \setminus \{l_{1}\}$. Assume first $k+q \leq i^{*}$. Here we analyze only what happens on the event $\{Z_{k}>0\}$ (the event $\{Z_{k}<0\}$ can be treated analogously and is therefore omitted). Then
\begin{equation}\label{e:est7}
\frac{X_{j}}{a_{n}} \geq \frac{X_{k+j_{0}}}{a_{n}} = \frac{\varphi_{j_{0}}Z_{k}}{a_{n}} + F_{2},
\end{equation}
where $j_{0}$ and $F_{2}$ are as in (i) above, with $|F_{2}| < \delta/4$.
 Write
$$ \frac{X_{j}}{a_{n}} = \frac{\varphi_{l_{1}}Z_{j-l_{1}}}{a_{n}} + \sum_{\scriptsize \begin{array}{c}
                          s=0  \\
                          s \neq l_{1}
                        \end{array}}^{q} \frac{\varphi_{s}Z_{j-s}}{a_{n}} =: \frac{\varphi_{l_{1}}Z_{j-l_{1}}}{a_{n}} + F_{3}.$$
Similarly as before we obtain $|F_{3}| < \delta/4$. Since
$$ W_{n}^{Z} \Big( \frac{i^{*}}{n} \Big) \geq \frac{|Z_{j-l_{1}}|}{a_{n}} \big(\varphi_{+} 1_{\{ Z_{j-l_{1}}>0 \}} + \varphi_{-} 1_{\{ Z_{j-l_{1}}<0 \}} \big) \geq \frac{\varphi_{l_{1}}Z_{j-l_{1}}}{a_{n}}$$
we have
$$ \frac{\varphi_{j_{0}} Z_{k}}{a_{n}} = \frac{|Z_{k}|}{a_{n}} \big(\varphi_{+} 1_{\{ Z_{k}>0 \}} + \varphi_{-} 1_{\{ Z_{k}<0 \}} \big) = W_{n}^{Z} \Big( \frac{i^{*}}{n} \Big) \geq \frac{\varphi_{l_{1}}Z_{j-l_{1}}}{a_{n}}, $$
which yields
\begin{equation}\label{e:est8}
\frac{ \varphi_{j_{0}} Z_{k}}{a_{n}} - \frac{X_{j}}{a_{n}} \geq \frac{\varphi_{l_{1}}Z_{j-l_{1}}}{a_{n}} - \frac{X_{j}}{a_{n}} = - F_{3}.
\end{equation}
Relations (\ref{e:est7}) and (\ref{e:est8}) yield
$$- (|F_{2}| + |F_{3}|) \leq -F_{3} \leq \frac{\varphi_{j_{0}} Z_{k}}{a_{n}} - \frac{X_{j}}{a_{n}} \leq -F_{2} \leq  |F_{2}| + |F_{3}|,$$
i.e.
$$ \Big| W_{n}^{Z} \Big( \frac{i^{*}}{n} \Big) - W_{n} \Big( \frac{i^{*}}{n} \Big) \Big| = \Big| \frac{\varphi_{j_{0}} Z_{k}}{a_{n}} - \frac{X_{j}}{a_{n}} \Big| \leq |F_{2}| + |F_{3}| < \frac{\delta}{4} + \frac{\delta}{4}=\frac{\delta}{2},$$
which is in contradiction with (\ref{e:i*}).
Alternatively assume $k+q > i^{*}$ and let $s_{0}$ be as in (ii) above. If $W_{n}((i^{*}+s_{0})/n)=X_{j}/a_{n}$, we similarly obtain a contradiction with (\ref{e:i*q}), and if $W_{n}((i^{*}+s_{0})/n)=X_{p}/a_{n}$ for some $p \in \{i^{*}, \ldots, i^{*}+s_{0}\}$, with the same reasoning as in (ii) we arrive at a contradiction. Hence this case also can not happen.
  \item[Case C:] There exist $l_{1} \in \{0,\ldots,q\}$ and $l \in \{0,\ldots,q\} \setminus \{l_{1}\}$ such that $ |Z_{j-l_{1}}|/a_{n} > \delta/[4(q+1)\gamma]$ and $ |Z_{j-l}|/a_{n} > \delta/[4(q+1)\gamma]$. In this case the event $H_{n, 4}$ occurs.
\end{itemize}
Therefore only Case C is possible, and this yields $D_{n} \cap (H_{n, 1} \cup H_{n, 2} \cup H_{n, 3})^{c} \subseteq H_{n, 4}$. Hence (\ref{e:estim1}) holds.
By stationarity we have
$$ \Pr(H_{n, 1}) \leq (q+1) \Pr \bigg( \frac{|Z_{1}|}{a_{n}} > \frac{\delta}{4(q+1) \gamma} \bigg), $$
and hence by the regular variation property we observe
\begin{equation}\label{e:est2}
\lim_{n \to \infty} \Pr(H_{n, 1})=0.
\end{equation}
Similarly
$$ \Pr(H_{n, 2}) \leq 2q \Pr \bigg( \frac{|Z_{1}|}{a_{n}} > \frac{\delta}{4(q+1) \gamma} \bigg), $$
and
\begin{equation}\label{e:est3}
\lim_{n \to \infty} \Pr(H_{n, 2})=0.
\end{equation}
Since $Z_{k}$ and $Z_{l}$ that appear in the formulation of $H_{n, 3}$ are independent, it follows that
$$ \Pr(H_{n, 3}) \leq \frac{2q}{n} \bigg[ n \Pr \bigg( \frac{|Z_{1}|}{a_{n}} > \frac{\delta}{4(q+1) \gamma} \bigg) \bigg]^{2}, $$
and hence
\begin{equation}\label{e:est5}
\lim_{n \to \infty} \Pr(H_{n, 3})=0.
\end{equation}
From the definition of the set $H_{n, 4}$ it follows that $k, j-l_{1}, j-l$ are all different, which implies that the random variables $Z_{k}$, $Z_{j-l_{1}}$ and $Z_{j-l}$ are independent. Using this and stationarity we obtain
$$ \Pr(H_{n, 4}) \leq \frac{q(q+1)}{n} \bigg[ n \Pr \bigg( \frac{|Z_{1}|}{a_{n}} > \frac{\delta}{4(q+1) \gamma} \bigg) \bigg]^{3}, $$
and hence we conclude
\begin{equation}\label{e:est10}
\lim_{n \to \infty} \Pr(H_{n, 4})=0.
\end{equation}
Now from (\ref{e:estim1}) and (\ref{e:est2})--(\ref{e:est10}) we obtain
$$ \lim_{n \to \infty} \Pr(D_{n})=0,$$
and hence (\ref{e:Yn}) yields
\begin{equation}\label{eq:Ynend}
\lim_{n \to \infty} \Pr(Y_{n}> \delta)=0.
\end{equation}

It remains to estimate the second term on the right hand side of (\ref{eq:AB}). Let
$$ E_{n} = \{\exists\,v \in \Gamma_{W_{n}} \ \textrm{such that} \ d(v,z) > \delta \ \textrm{for every} \ z \in \Gamma_{ W_{n}^{Z}} \}.$$
Then by the definition of $T_{n}$
\begin{equation}\label{e:Tnfirst}
 \{T_{n} > \delta\} \subseteq E_{n}.
\end{equation}
On the event $E_{n}$ it holds that $d(v, \Gamma_{ W_{n}^{Z}})> \delta$.
Interchanging the roles of the processes $W_{n}(\,\cdot\,)$ and $W_{n}^{Z}(\,\cdot\,)$, in the same way as before for the event $D_{n}$ it can be shown that
\begin{equation}\label{e:i*qneg}
 \Big| W_{n}^{Z} \Big( \frac{i^{*}-l}{n} \Big) - W_{n} \Big( \frac{i^{*}}{n} \Big) \Big| > \delta
\end{equation}
for all $l=0,\ldots, q$ (such that $i^{*}-l \geq 0$), where $i^{*}=\floor{nt_{v}}$ or $i^{*}=\floor{nt_{v}}-1$, and $v=(t_{v},x_{v})$.

Now we want to show that $E_{n} \cap (H_{n, 1} \cup H_{n, 2} \cup H_{n, 3})^{c} \subseteq H_{n, 4}$, and hence assume the event $E_{n} \cap (H_{n, 1} \cup H_{n, 2} \cup H_{n, 3})^{c}$ occurs. Since (\ref{e:i*qneg}) (for $l=0$) is in fact (\ref{e:i*}), repeating the arguments used for $D_{n}$ we conclude that (\ref{e:est31}) holds. Here we also claim that $W_{n}(i^{*}/n) = X_{j}/a_{n}$ for some $j \in \{1,\ldots,i^{*}\} \setminus \{k,\ldots,k+q\}$. Hence assume this is not the case, i.e. $W_{n}(i^{*}/n) = X_{j}/a_{n}$ for some $j \in \{k,\ldots,k+q\}$ (with $j \leq i^{*}$). We can repeat the arguments from (i) above to conclude that $k + q \leq i^{*}$ is not possible. It remains to see what happens when $k + q > i^{*}$. Let
$$ W_{n}^{Z} \Big( \frac{i^{*}-q}{n} \Big) = \frac{ |Z_{s}|}{a_{n}} \big(\varphi_{+} 1_{\{ Z_{s}>0 \}} + \varphi_{-} 1_{\{ Z_{s}<0 \}} \big)$$
for some $s \in \{1, \ldots, i^{*}-q\}$. Note that $i^{*}-q \geq 1$ since $q+1 \leq k  \leq i^{*}$. We distinguish two cases:
\begin{itemize}
\item[(a)] $W_{n}^{Z}(i^{*}/n) > W_{n}(i^{*}/n)$. In this case the definition of $i^{*}$ implies that $W_{n}(i^{*}/n) \leq x_{v} \leq W_{n}^{Z}(i^{*}/n)$. Since $|t_{v}-(i^{*}-q)/n| < (q+1)/n \leq \delta$,  from $d(v, \Gamma_{W_{n}^{Z}})> \delta$ we conclude
$$ \widetilde{d} \Big((x_{v}, \Big[ W_{n}^{Z} \Big( \frac{i^{*}-q}{n} \Big), W_{n}^{Z} \Big(\frac{i^{*}}{n} \Big) \Big] \Big) > \delta,$$
where $\widetilde{d}$ is the Euclidean metric on $\mathbb{R}$. This yields
$$W_{n}^{Z} \Big( \frac{i^{*}-q}{n} \Big) > W_{n} \Big( \frac{i^{*}}{n} \Big),$$
and from (\ref{e:i*qneg}) we obtain
\begin{equation}\label{e:estE1}
 W_{n}^{Z} \Big( \frac{i^{*}-q}{n} \Big) > W_{n} \Big( \frac{i^{*}}{n} \Big) + \delta.
\end{equation}
From this, taking into account relation (\ref{e:phipm3}), we obtain
$$ \frac{|Z_{s}|}{a_{n}} \geq   \frac{1}{\gamma}\,W_{n}^{Z} \Big( \frac{i^{*}-q}{n} \Big) > \frac{1}{\gamma} \Big( -\frac{\delta}{4} + \delta \Big) = \frac{3\delta}{4\gamma} > \frac{\delta}{4(q+1)\gamma},$$
and since $H_{n, 3}^{c}$ occurs it follows that
\begin{equation}\label{e:estE12}
\frac{|Z_{l}|}{a_{n}} \leq \frac{\delta}{4(q+1)\gamma} \quad \textrm{for every} \  l \in \{s-q,\ldots,s+q\} \setminus \{s\}.
\end{equation}
Let $p_{0} \in \{0,\ldots,q\}$ be such that $\varphi_{p_{0}}Z_{s} =  |Z_{s}| \big(\varphi_{+} 1_{\{ Z_{s}>0 \}} + \varphi_{-} 1_{\{ Z_{s}<0 \}} \big) $.
Since $s + p_{0} \leq i^{*}$, it holds that
\begin{equation}\label{e:estE2}
 \frac{X_{j}}{a_{n}} = W_{n} \Big( \frac{i^{*}}{n} \Big) \geq \frac{X_{s+p_{0}}}{a_{n}} = \frac{\varphi_{p_{0}}Z_{s}}{a_{n}} + F_{4},
\end{equation}
where
$$ F_{4} =  \sum_{\scriptsize \begin{array}{c}
                          m=0 \\
                          m \neq p_{0}
                        \end{array}}^{q}\frac{\varphi_{m}Z_{s+p_{0}-m}}{a_{n}}.$$
From (\ref{e:estE1}) and (\ref{e:estE2}) we obtain
$$ \frac{\varphi_{p_{0}} Z_{s}}{a_{n}} > \frac{X_{j}}{a_{n}} + \delta \geq \frac{\varphi_{p_{0}} Z_{s}}{a_{n}} + F_{4} + \delta,$$
i.e.
$\delta < - F_{4}$. But this is not possible since by (\ref{e:estE12})
$$ |F_{4}| \leq \frac{\delta}{4},$$
and we conclude that this case can not happen.
\item[(b)] $W_{n}^{Z}(i^{*}/n) \leq W_{n}(i^{*}/n)$. Then from (\ref{e:i*qneg}) we get
\begin{equation}\label{e:estE3}
 W_{n} \Big( \frac{i^{*}+s_{0}}{n} \Big) \geq W_{n} \Big( \frac{i^{*}}{n} \Big) \geq W_{n}^{Z} \Big( \frac{i^{*}}{n} \Big) + \delta,
\end{equation}
where $s_{0} \in \{1,\ldots,q \}$ is such that $i^{*}+s_{0}=k+q$. Hence
$$ \Big| W_{n}^{Z} \Big( \frac{i^{*}}{n} \Big) - W_{n} \Big(\frac{i^{*}+s_{0}}{n} \Big) \Big| > \delta,$$
and repeating the arguments from (ii) above we conclude that this case also can not happen.
\end{itemize}
Thus we have proved that  $W_{n}(i^{*}/n) = X_{j}/a_{n}$ for some $j \in \{1,\ldots,i^{*}\} \setminus \{k,\ldots,k+q\}$. Similar as before one can prove now that Cases A and B can not happen (when $k+q > i^{*}$ we use also the arguments from (a) and (b)), which means that only Case C is possible. In that case the event $H_{n,4}$ occurs, and thus we have proved that $E_{n} \cap (H_{n, 1} \cup H_{n, 2} \cup H_{n, 3})^{c} \subseteq H_{n, 4}$. Hence
$$ E_{n} \subseteq H_{n, 1} \cup H_{n, 2} \cup H_{n, 3} \cup H_{n, 4},$$
and from (\ref{e:est2})--(\ref{e:est10}) we obtain
$$ \lim_{n \to \infty} \Pr(E_{n})=0,$$
Therefore (\ref{e:Tnfirst}) yields
\be\label{eq:Tnend}
\lim_{n \to \infty} \Pr(T_{n}> \delta)=0.
\ee
Now from (\ref{eq:AB}), (\ref{eq:Ynend}) and (\ref{eq:Tnend}) we obtain (\ref{e:max1}),
and finally conclude that $L_{n}(\,\cdot\,) \dto (\beta V(\,\cdot\,), W(\,\cdot\,))$
in $D([0,1], \mathbb{R}^{2})$ with the weak $M_{2}$ topology.

Therefore we proved the theorem for finite order moving average processes. Using this we will obtain now the functional convergence of $L_{n}(\,\cdot\,)$ for infinite order moving averages. Let $X_{i}=\sum_{j=0}^{\infty}\varphi_{j}Z_{i-j}$, $i \in \mathbb{Z}$, and put
$$ \lambda = \left\{ \begin{array}{cl}
                                   \varphi_{+} \wedge \varphi_{-}, & \quad \textrm{if} \ \varphi_{+}>0 \ \textrm{and} \ \varphi_{-}>0, \\
                                   \varphi_{+}, & \quad \textrm{if} \ \varphi_{-}=0,\\
                                   \varphi_{-}, & \quad \textrm{if} \ \varphi_{+}=0.
                                 \end{array}\right.$$
 Since $\sum_{i=0}^{\infty}|\varphi_{i}| < \infty$, for large $q \in \mathbb{N}$ it holds that
$ \sum_{i=q}^{\infty}|\varphi_{i}| < \lambda$. Fix such $q$ and define
$$ X_{i}^{q} = \sum_{j=0}^{q-1}\varphi_{j}Z_{i-j} + \varphi'_{q} Z_{i-q} \qquad i \in \mathbb{Z},$$
where $\varphi'_{q}= \sum_{i=q}^{\infty}\varphi_{i}$,
and
$$ V_{n, q}(t) = \sum_{i=1}^{\floor{nt}} \frac{X_{i}^{q}-b_{n}}{a_{n}}, \qquad W_{n, q}(t) = \bigvee_{i=1}^{\floor{nt}} \frac{X_{i}^{q}}{a_{n}}, \qquad t \in [0,1],$$
where the sequence $(a_n)$ satisfies~\eqref{eq:onedimregvar} and
$$ b_{n} = \left\{ \begin{array}{cc}
                                   0, & \quad \textrm{if} \ \alpha \in (0,1], \\
                                   \beta \mathrm{E}(Z_{1}), & \quad \textrm{if} \ \alpha \in (1,2).
                                 \end{array}\right.$$
The coefficients $\varphi_{0}, \ldots, \varphi_{q-1}, \varphi'_{q}$ satisfy relation (\ref{eq:FiniteMAcond}) and from the definition of $\lambda$ it follows that
$$\max \{ \varphi_{j} \vee 0 : j =0,\ldots, q-1\} \vee (\varphi'_{q} \vee 0) = \varphi_{+}$$
 and
 $$ \max \{ -\varphi_{j} \vee 0 : j =0,\ldots, q-1\} \vee (-\varphi'_{q} \vee 0) = \varphi_{-}.$$
Therefore for the finite order moving average process $(X_{i}^{q})_{i}$ it holds that
$$ L_{n, q}(\,\cdot\,) := (V_{n, q}(\,\cdot\,), W_{n, q}(\,\cdot\,)) \dto (\beta V(\,\cdot\,), W(\,\cdot\,)) \qquad \textrm{as} \ n \to \infty,$$
in $D([0,1], \mathbb{R}^{2})$ with the weak $M_{2}$ topology. If we show that for every $\epsilon >0$
\begin{equation}\label{e:Slutskyinf01}
 \lim_{q \to \infty} \limsup_{n \to \infty}\Pr[d_{p}(L_{n, q}, L_{n})> \epsilon]=0,
\end{equation}
then by a generalization of Slutsky's theorem (see Theorem 3.5 in~\cite{Resnick07}) it will follow $L_{n}(\,\cdot\,) \dto (\beta V(\,\cdot\,), W(\,\cdot\,))$, as $n \to \infty$, in $D([0,1], \mathbb{R}^{2})$ with the weak $M_{2}$ topology. By the definition of the metric $d_{p}$ in (\ref{e:defdp}) and the fact that the metric $d_{M_{2}}$ on $D([0,1], \mathbb{R})$ is bounded above by the uniform metric on $D([0,1], \mathbb{R})$, it suffices to show that
$$ \lim_{q \to \infty} \limsup_{n \to \infty}\Pr \bigg( \sup_{0 \leq t \leq 1}|V_{n, q}(t) - V_{n}(t)|> \epsilon \bigg)=0$$
and
$$ \lim_{q \to \infty} \limsup_{n \to \infty}\Pr \bigg( \sup_{0 \leq t \leq 1}|W_{n, q}(t) - W_{n}(t)|> \epsilon \bigg)=0$$
Recalling the definitions, we have
\begin{equation*}
 \Pr \bigg( \sup_{0 \leq t \leq 1}|V_{n, q}(t) - V_{n}(t)|> \epsilon \bigg) \leq \Pr \bigg( \sum_{i=1}^{n}\frac{|X_{i}^{q}-X_{i}|}{a_{n}} > \epsilon \bigg)
\end{equation*}
and
\begin{equation*}
 \Pr \bigg( \sup_{0 \leq t \leq 1}|W_{n, q}(t) - W_{n}(t)|> \epsilon \bigg) \leq \Pr \bigg( \bigvee_{i=1}^{n}\frac{|X_{i}^{q}-X_{i}|}{a_{n}} > \epsilon \bigg) \leq \Pr \bigg( \sum_{i=1}^{n}\frac{|X_{i}^{q}-X_{i}|}{a_{n}} > \epsilon \bigg)
\end{equation*}
In the proof of Theorem 3.1 in~\cite{BaKr14} it has been shown that
\begin{equation*}\label{e:BKAT}
\lim_{q \to \infty} \limsup_{n \to \infty} \Pr \bigg( \sum_{i=1}^{n}\frac{|X_{i}^{q}-X_{i}|}{a_{n}} > \epsilon \bigg)=0.
\end{equation*}
Hence (\ref{e:Slutskyinf01}) holds, which means that $L_{n}(\,\cdot\,) \dto (\beta V(\,\cdot\,), W(\,\cdot\,))$, as $n \to \infty$, in $D([0,1], \mathbb{R}^{2})$ with the weak $M_{2}$ topology. This concludes the proof.
\end{proof}

\begin{rem}\label{r:jcM2M1}
Theorem~\ref{t:FinMA} gives functional convergence of the joint stochastic process $L_{n}(\,\cdot\,)$ in the space $D([0,1], \mathbb{R}^{2})$ endowed with the weak $M_{2}$ topology induced by the metric $d_{p}$ given in (\ref{e:defdp}). Since for the second coordinate of $L_{n}(\,\cdot\,)$, i.e.~the partial maxima process, functional convergence actually holds in the stronger $M_{1}$ topology (see for instance~\cite{BaTa16} and~\cite{Kr14}), one could raise a question whether it could be possible to obtain a sort of joint convergence of $L_{n}(\,\cdot\,)$ in the $M_{2}$ topology on the first coordinate and in the $M_{1}$ topology on the second coordinate. Precisely, does the functional convergence hold in the topology induced by the metric
$$ \widetilde{d_{p}}(x,y)= \max \{ d_{M_{2}}(x_{1},y_{1}), d_{M_{1}}(x_{2}, y_{2}) \}$$
 for $x=(x_{1}, x_{2}), y=(y_{1}, y_{2}) \in D([0,1],
 \mathbb{R}^{2})$?
 Here $d_{M_{1}}$ denotes the $M_{1}$ metric, defined by
$$ d_{M_{1}}(x_{1},x_{2})
  = \inf \{ \|r_{1}-r_{2}\|_{[0,1]} \vee \|u_{1}-u_{2}\|_{[0,1]} : (r_{i},u_{i}) \in \Pi(x_{i}), i=1,2 \}$$
for $x_{1},x_{2} \in D([0,1], \mathbb{R})$, where $\Pi(x)$ is the set of $M_{1}$ parametric representations of the completed graph $\Gamma_{x}$, i.e.~continuous nondecreasing functions $(r,u)$ mapping $[0,1]$ onto $\Gamma_{x}$.

If the space $D([0,1], \mathbb{R})$ with the $M_{2}$ topology is a Polish space (which to our best knowledge is still an open question, see~\cite{Bo18}, Remark 4.1), we could proceed similarly as in~\cite{Kr18} and the answer to the above question would be affirmative.

We will take another approach.
Repeating the arguments from the proof of Lemma~\ref{t:FinMAlemma01}, but with $d_{M_{1}}$ for the second components of the corresponding processes instead of $d_{M_{2}}$, we derive immediately that
$L_{n}^{Z}(\,\cdot\,) \dto (\beta V(\,\cdot\,),  W(\,\cdot\,))$
 in $D([0,1], \mathbb{R}^{2})$ with the topology induced by the metric $\widetilde{d_{p}}$. In order to obtain $L_{n}(\,\cdot\,) \dto (\beta V(\,\cdot\,),  W(\,\cdot\,))$ in the same topology, as in the proof of theorem~\ref{t:FinMA} it remains to show that
 \begin{equation*}\label{e:max111}
\lim_{n \to \infty} \Pr[d_{M_{1}}(W_{n}^{Z}, W_{n}) > \delta]=0
\end{equation*}
for all $\delta >0$ (compare this relation to (\ref{e:max1})). We will not pursue it here, since it would presumably require a lot of technical details connected to parametric representation machinery, but instead we will use relation (\ref{e:max1}) and the fact that the second coordinate of $L_{n}(\,\cdot\,)$ refers to nondecreasing functions. By Remark 12.8.1 in~\cite{Whitt02} the following metric is a complete metric topologically equivalent to $d_{M_{1}}$:
$$ {d_{M_{1}}^{*}}(x_{1}, x_{2}) = d_{M_{2}}(x_{1}, x_{2}) + \lambda (\widehat{\omega}(x_{1},\cdot), \widehat{\omega}(x_{2},\cdot)),$$
where $\lambda$ is the L\'{e}vy metric on a space of distributions
$$ \lambda (F_{1},F_{2}) = \inf \{ \epsilon >0 : F_{2}(x-\epsilon) - \epsilon \leq F_{1}(x) \leq F_{2}(x+\epsilon) + \epsilon \ \ \textrm{for all} \ x \},$$
and
$$ \widehat{\omega}(x,z) = \left\{ \begin{array}{cc}
                                   \omega(x,e^{z}), & \quad z<0,\\[0.4em]
                                   \omega(x,1), & \quad z \geq 0,
                                 \end{array}\right.$$
with
$$ \omega (x,\delta) = \sup_{0 \leq t \leq 1} \ \sup_{0 \vee (t-\delta) \leq t_{1} < t_{2} < t_{3} \leq (t+\delta) \wedge 1} \{\| x(t_{2}) - [x(t_{1}), x(t_{3})] \| \}$$
for $x \in D([0,1], \mathbb{R})$ and $\delta >0$.
Here $\|z-A\|$ denotes the distance between a point $z$ and a subset $A \subseteq \mathbb{R}$.

Since $W_{n}(\,\cdot\,)$ and $W_{n}^{Z}(\,\cdot\,)$ are nondecreasing, for $t_{1} < t_{2} < t_{3}$ it holds that
$ \|W_{n}(t_{2}) - [W_{n}(t_{1}), W_{n}(t_{3})] \|=0$, which yields $\omega(W_{n}, \delta)=0$ for all $\delta>0$, and similarly $\omega(W_{n}^{Z}, \delta)=0$. Hence $\lambda (W_{n}^{Z}, W_{n})=0$, and $d_{M_{1}}^{*}(W_{n}^{Z}, W_{n}) = d_{M_{2}}(W_{n}^{Z}, W_{n})$. Now from (\ref{e:max1}) we obtain
$$ \lim_{n \to \infty} \Pr[d_{M_{1}}^{*}(W_{n}^{Z}, W_{n}) > \delta]=0,$$
and conclude that $L_{n}(\,\cdot\,)$ converges in distribution to $(\beta V(\,\cdot\,),  W(\,\cdot\,))$ in the topology induced by the metric
$$ {d_{p}}^{*}(x,y)= \max \{ d_{M_{2}}(x_{1},y_{1}), d_{M_{1}}^{*}(x_{2}, y_{2}) \}$$
 for $x=(x_{1}, x_{2}), y=(y_{1}, y_{2}) \in D([0,1],
 \mathbb{R}^{2})$, i.e.~in the $M_{2}$ topology on the first coordinate of $L_{n}(\,\cdot\,)$ and in the $M_{1}$ topology on the second coordinate.
\end{rem}

\section*{Acknowledgements}
 The author would like to thank Hrvoje Planini\'{c} for valuable comments and suggestions which helped to improve the manuscript. This work has been supported in part by Croatian Science Foundation under the project 3526 and by University of Rijeka research grant 13.14.1.2.02.


\begin{thebibliography}{00}




\bibitem{As83}
A. Astrauskas, Limit theorems for sums of linearly generated random variables, {\it Lith. Math. J.} {\bf 23} (1983), 127--134.

\bibitem{AvTa92}
F. Avram and M. Taqqu, Weak convergence of sums of moving
averages in the $\alpha$--stable domain of attraction, {\it Ann.
Probab.} {\bf 20} (1992), 483--503.

\bibitem{BaJaLo16}
R. Balan, A. Jakubowski and S. Louhichi, Functional Convergence of Linear Processes with Heavy-Tailed Innovations, {\it J. Theoret. Probab.} (2016), \textbf{29}, 491--526.

\bibitem{BaKr14}
B. Basrak and D. Krizmani\'{c}, A limit theorem for moving averages in the $\alpha$--stable domain of attraction, {\it Stochastic Process. Appl.} {\bf 124} (2014), 1070--1083.

\bibitem{BKS}
B. Basrak, D. Krizmani\'{c} and J. Segers, A functional limit theorem for partial sums
of dependent random variables with infinite variance, {\it Ann. Probab.} {\bf 40} (2012), 2008--2033.

\bibitem{BaTa16}
B. Basrak and A. Tafro, A complete convergence theorem for stationary regularly varying multivariate time series, {\it Extremes} {\bf 19} (2016), 549--560.

\bibitem{Bo18}
B. B\"{o}ttcher, Embedded Markov chain approximations
in Skorokhod topologies, https://arxiv.org/abs/1409.4656 (accessed 12 July 2018)

\bibitem{ChTe79}
T. L. Chow and J. L. Teugels, The sum and the maximum of i.i.d. random variables, in {\it Proceedings of the Second Prague Symposium of Asymptotic Statistics (Hradec Kr\'{a}lov\'{e}, 1978)}, North-Holland, Amsterdam, 1979, 81--92.

\bibitem{Cl83}
D. Cline, {\it Infinite series of random variables with regularly varying tails}, Tecnical Report No. 83-24, Institute of Applied Mathematics and Statistics, University of British Columbia, 1983.

\bibitem{DaRe85}
R. Davis and S. I. Resnick, Limit theorems for moving averages with regularly varying tail probabilities, {\it Ann. Probab.} {\bf 13} (1985), 179--195.



\bibitem{Kr14}
D. Krizmani\'{c}, Weak convergence of partial maxima processes in the $M_{1}$ topology, {\it Extremes} {\bf 17} (2014), 447--465.

\bibitem{Kr18}
D. Krizmani\'{c}, A note on joint functional convergence of partial sum and maxima for linear processes, {\it Statist. Probab. Lett.} \textbf{138} (2018), 42-46.

\bibitem{Re86}
S. I. Resnick, Point processes, regular variation and weak convergence, {\it Adv. Appl. Probab.} {\bf 18} (1986), 66--138.

\bibitem{Re87}
S. I. Resnick, {\it Extreme Values, Regular Variation, and Point Processes}, Springer Science+Business Media LLC, New York, 1987.

\bibitem{Resnick07}
S. I. Resnick, {\it Heavy-Tail Phenomena: Probabilistic nad
Statistical Modeling}, Springer Science+Business Media LLC, New
York, 2007.

\bibitem{Sa99}
K. Sato, {\it L\'{e}vy Processes and Infinitely Divisible Distributions}, Cambridge Studies in Advanced Mathematics, Vol. 68. Cambridge Univesrity Press, Cambridge, 1999.



\bibitem{Whitt02}
W. Whitt, {\it Stochastic-Process Limits}, Springer-Verlag
LLC, New York, 2002.

\end{thebibliography}

\end{document}